\theoremstyle{plain}
\newtheorem{Theorem}{Theorem}
\newtheorem{theorem}[Theorem]{Theorem}
\newtheorem{lemma}[Theorem]{Lemma}
\newtheorem{corollary}[Theorem]{Corollary}
\numberwithin{table}{section}
\numberwithin{figure}{section}
\numberwithin{equation}{section}
\definecolor{mogens}{rgb}{0.0, 0.0, 0.0}
\definecolor{jamaal}{rgb}{.8, .1,.1}
\definecolor{darkblue}{rgb}{.2, 0.2,.8}
\definecolor{darkgreen}{rgb}{0,0.5,0.3}
\definecolor{darkred}{rgb}{.8, .1,.1}
\newcommand\demoo{\xqed{$\triangle$}}
\newcommand*\bigcdot{\mathpalette\bigcdot@{.5}}
\newcommand*\bigcdot@[2]{\mathbin{\vcenter{\hbox{\scalebox{#2}{$\m@th#1\bullet$}}}}}
\newcommand{\N}{\mathbb{N}}
\newcommand{\e}{{\rm e}}
\theoremstyle{definition}
\newcommand\xqed[1]{%
  \leavevmode\unskip\penalty9999 \hbox{}\nobreak\hfill
  \quad\hbox{#1}}
\theoremstyle{remark}
\newtheorem{remark}[Theorem]{Remark}
\numberwithin{Theorem}{section}
\numberwithin{equation}{section}
\newcommand{\md}{{\, \mathrm{d} }}
\newcommand{\tuborg}[1]{\{ #1 \}}
\DeclareMathAlphabet{\mathpzc}{OMS}{pzc}{m}{it}
\newcommand{\J}{\mathpzc{J}}
\newcommand{\vect}[1]{\pmb{#1}}
\newcommand{\mat}[1]{\boldsymbol{\bm #1}}
\newcommand*\expandableInput[1]{\@@input#1 }
\newcommand{\Rmnum}[1]{\expandafter\@slowromancap\romannumeral #1@}
\tikzset{
    >=stealth',
    punkt/.style={
           rectangle,
           rounded corners,
           draw=black, thick,
           text width=7em,
           minimum height=2em,
           text centered},
    punktl/.style={
           re
           tangle,
           rounded corners,
           draw=black, thick,
           
           text width=7em,
           minimum height=2em,
           text centered},
    pil/.style={
           ->,
           shorten <=4pt,
           shorten >=4pt,},
    pildotted/.style={
           ->,
           shorten <=4pt,
           shorten >=4pt,
  dotted,
  }
}
\newcommand\iay{\mathscr{i}}
\newcommand\jay{\mathscr{j}}
\newcommand\sai{\mathscr{s}}
\title[]{Aggregate Markov models in life insurance: estimation via the EM algorithm}
\author{Jamaal Ahmad}
\address{Department of Mathematical Sciences, University of Copenhagen, Universitetsparken 5, DK-2100 Copenhagen \O, Denmark.}
\email{jamaal@math.ku.dk}
\author{Mogens Bladt}
\address{Department of Mathematical Sciences, University of Copenhagen, Universitetsparken 5, DK-2100 Copenhagen \O, Denmark.}
\email{bladt@math.ku.dk}
\begin{document}
\maketitle


\begin{abstract}
In this paper, we consider statistical estimation of time--in\-ho\-mo\-ge\-neous aggregate Markov models. Unaggregated models, which corresponds to Markov chains, are commonly used in multi--state life insurance to model the biometric states of an insured. By aggregating microstates to each biometric state, we are able to model dependencies between transitions of the biometric states as well as the distribution of occupancy in these. This allows for non--Markovian modelling in general. Since only paths of the macrostates are observed, we develop an expectation--maximization (EM) algorithm to obtain maximum likelihood estimates of transition intensities on the micro level. Special attention is given to a semi-Markovian case, known as the reset property, which leads to simplified estimation procedures where EM algorithms for inhomogeneous phase--type distributions can be used as building blocks. We provide a numerical example of the latter in combination with piecewise constant transition rates in a three--state disability model with data simulated from a time--inhomogeneous semi--Markov model. Comparisons of our fits with more classic GLM-based fits as well as true and empirical distributions are provided to relate our model to existing models and their tools.      

\medskip

\noindent \textbf{Keywords:}\ Phase--type distributions; Parametric inference; EM algorithm; multi--state life insurance; semi-Markovianity.   

\medskip

\noindent \textbf{2010 Mathematics Subject Classification:} 62M05, 60J27, 60J28, 91G70.

\noindent \textbf{JEL Classification:} C46, C63, G22.

\end{abstract}


\raggedbottom

\section{Introduction}
This paper considers statistical estimation of the finite state space, time--in\-ho\-mo\-ge\-neous aggregate Markov process introduced in the companion paper \cite{AhmadBladtFurrer}. The term aggregate refers to certain states of the process being pooled into macrostates. For example, the interpretation of macrostates could be biometric or behavioural states in a life insurance context, like active, disabled, or free--policy. The pooled states are referred to as microstates and are introduced to improve the sojourn time distributions of the macrostates and introduce dependencies between transitions. This allows for non--Markovian modelling in general. Since only paths of the macrostates are observed, this corresponds to an incomplete data problem with respect to the underlying microstates, and we employ an expectation--maximisation (EM) algorithm to obtain maximum likelihood estimates of transition intensities on the micro level.       

The aggregate Markov model of \cite{AhmadBladtFurrer} may be considered as the underlying process of a time--in\-ho\-mo\-ge\-neous  BMAP (Batch Markovian Arrival Processes, \cite{latouche1999introduction}), and contains as special cases time--ho\-mo\-ge\-neous phase--type renewal processes (see \cite{NeutsRenewal}), Markov modulated Poisson processes (see \cite{ryden94}) and of course (time--homogeneous) BMAPs. In the aggregate Markov model, the sojourn time distributions are inhomogeneous phase-type distributed (IPH, \cite{Albrecher-Bladt-2019}) and dependent, as shown in \cite[Proposition 4.1]{AhmadBladtFurrer}.\ Methods for fitting independent IPH distributions via the EM algorithm have been considered in \cite{Albrecher-Bladt-Yslas-2020} for commuting sub--intensity matrix functions and in \cite{IPH_Piecewise} for general IPHs. Methods for the estimation of homogeneous BMAPs can be found in \cite{breuer2002algorithm,breuer2003markov}. 

In the multi--state life insurance context, however, we both need the time--inhomogeneity and dependency between transitions to adequately capture age dependencies and duration effects. The nature of such models implies that sub--intensity matrices at different times may not commute, which is a crucial assumption in the independent IPH fitting of \cite{Albrecher-Bladt-Yslas-2020}.
We, therefore, extend the general approach of \cite{IPH_Piecewise} to include the dependencies. This will provide the main contribution of the paper. Special attention is paid to the semi--Markovian case considered in \cite[Subsection 4.2]{AhmadBladtFurrer}, known as the reset property, where sojourn time distributions are IPH and independent.\ Here, we show how algorithms of \cite{IPH_Piecewise} partly can be used as inputs to our algorithms.     

An important ingredient in our methods is the approximation of the models by piecewise constant transition rates. In general, transition probabilities of the Markov processes involved are solutions to ordinary differential equations of Kolmogorov type, the solution of which is denoted the product integral (see \cite{GillJohansen, Johansen}).\ Assuming piecewise constant rates, the solutions can be expressed explicitly in terms of products of matrix exponentials. Furthermore, maximum likelihood estimation greatly simplifies and can be expressed in terms of Multinomial and Poisson regressions based on sufficient statistics in the different time intervals.   

While we develop an EM algorithm for the general model, we only implement and apply it to data where the reset property is satisfied along with piecewise constant transition rates; this relates to the approach in \cite{IPH_Piecewise}.\ Here, we present a numerical example where macro data is simulated from a time--inhomogeneous semi-Markovian disability model commonly used in the context of disability insurance (see, e.g., \cite{hoem72, helwich, christiansen2012,BuchardtMollerSchmidt}). We compare our model fits with more classic GLM-based fits as well as true and empirical distributions to illustrate how the aggregate Markov model with the reset property is able to capture duration effects in these kinds of models.    

{\color{mogens}
The approach of using a GLM fit to a semi-Markovian model is a classical way that requires minimal implementation since GLM routines from standard software packages can be employed. The execution of the GLM is fast and can be applied to any semi--Markovian model with piecewise constant transition rates. On the other hand, aggregate Markov models are incomplete data models in which the GLM approach cannot be employed. Here, the EM algorithm is a standard method, which suffers from the usual issues related to any EM algorithm: slow convergence and the possibility of convergence to a local maximum or saddlepoint. Additionally, the current model suffers from overparameterisation and hence lack of identifiability. In spite of this, the potential applications of the aggregate Markov model with unobserved microstates are much broader and can, in principle, be used in any Markovian life insurance setting; see \cite{AhmadBladtFurrer} for further details.
}

The remainder of the paper is structured as follows. In Section \ref{sec:AggregatedMarkov}, we set up the model and notation. Section \ref{sec:complete_data} considers the estimation of completely observed aggregate Markov processes. Special attention is given to the piecewise constant case and the reset property, where links to Multinomial and Poisson regressions are provided. Then, in Section \ref{sec:em}, an EM algorithm for fitting aggregate Markov models from observing only the macro process is developed. Special attention is devoted to the piecewise constant case and the reset property. The proof of the EM algorithm is deferred to Appendix \ref{ap:proofs}. Finally, Section \ref{sec:num} contains a numerical example in a disability model.


\section{The aggregate Markov model}\label{sec:AggregatedMarkov}
We now present the aggregate Markov model introduced in the companion paper \cite{AhmadBladtFurrer} and some probabilistic properties of the model that are relevant to the present paper. {\color{mogens} Consider a jump process $X_1=\{X_1(t)\}_{t\geq 0}$ taking values on the finite set $\mathcal{J} = \{1,2,...,J\}$, $J\in \N$. We think of these as biometric or behavioural states governing the states of the insured in a life insurance context, for example, active, disabled, free--policy, or dead. We refer to these states as \textit{macrostates}.\ To each macrostate $i\in \mathcal{J}$, we assign a number $d_i \geq 1$ of so--called \textit{microstates}. If $d_i=1$, the macrostate coincides with the state $i$ of the Markov process $X_1$. If $d_i>1$, we think of a state $i$ as consisting of several underlying and unobserved microstates. Usually, the microstates are added with the sole purpose of improving the fit of the model, in particular, to capture duration effects, and the microstates are, therefore, not attributed to any physical interpretation. }

{\color{mogens}In order to keep track of both macro and microstates, we define a time-inhomogeneous Markov jump process  
$$\mat{X}=\{\mat{X}(t)\}_{t\geq 0}=\{(X_1(t), X_2(t))\}_{t\geq 0}$$
with state space
\begin{align*}
E=\{ \mat{\iay} = (i,\tilde{i}) : i\in \mathcal{J}, \, \tilde{i}\in \{1,2,...,d_i\}  \} .
\end{align*}
Here $X_2(t)$ is, in general, dependent on $X_1(t)$. For each $\mat{\iay} = (i,\tilde{i})$, $i$ denotes the current macrostate, which identifies a group of microstates, and $\tilde{i}$ denotes the actual microstate within this group. 
The total number of (micro)states is $|E|=\bar{d} = \sum_{i\in \J} d_i$. We assume that  $X_1(0) \equiv 1$, so that the initial distribution of $\mat{X}$ is given by
\begin{align*}
\vect{\pi} = (\vect{\pi}_1(0),\bm{0})
\end{align*}
for some initial distribution ($d_1$--dimensional row vector), $\mat{\pi}_1(0)$, the elements of which are the probabilities of $X_2(t)$ initiating in the different microstates, i.e. the distribution of $X_2(0)$. }

{\color{mogens}Denote by $\mat{M}(t)$ the intensity matrix function for $\mat{X}$.}
This can be written in the following block form: 
\begin{equation}
  \mat{M}(t) =
\begin{pmatrix}
\mat{M}_{11}(t) & \mat{M}_{12}(t) & \cdots & \mat{M}_{1J}(t) \\
 \mat{M}_{21}(t) & \mat{M}_{22}(t) & \cdots & \mat{M}_{2J}(t) \\
\vdots & \vdots & \ddots & \vdots \\
 \mat{M}_{J1}(t) & \mat{M}_{J2}(t) & \cdots & \mat{M}_{JJ}(t) \\
\end{pmatrix}\!, \label{eq:Lambda}
\end{equation}
where $\mat{M}_{ii}(t)$ are sub--intensity matrices of dimension $d_i \times d_i$ providing transition rates between the microstates {\color{mogens} within} macrostate $i$ at time $t$, and $\mat{M}_{ij}(t)$ are non-negative matrices of dimension $d_i \times d_j$ providing transition rates from microstates within macrostate $i$ to microstates within macrostate $j$ at time $t$. We denote an element of $\mat{M}(t)$ by $\mu_{\mat{\iay}\mat{\jay}}(t)$, $\mat{\iay}, \mat{\jay} \in E$.\ 

 {\color{mogens} The (defective) transition matrix for transitions between the microstates of macrostate $i$ only,} are given as the product integral (see \cite{Johansen,GillJohansen}) 
\begin{align}\label{eq:barP}
\bar{\mat{P}}_i(s,t) = \Prodi_s^t \!\left(\bm{I}+\mat{M}_{ii}(x)\md x\right), \quad i\in \mathcal{J}.
\end{align}
{\color{mogens} Throughout the paper, we let $\vect{e}_{\tilde{i}}$ denote a column vector (of appropriate dimension) having 1 in entry $\tilde{i}$ and zeros otherwise. Primes on vectors and matrices will denote matrix transposition.}

{\color{mogens}If we let $\vect{e}_{\tilde{i}}$ be of dimension $d_i$,} then $\vect{e}_{\tilde{i}}^{\prime} \bar{\mat{P}}_i(s,t)$ 
 contains the distribution of $\mat{X}(t)$ within macrostate $i$ on the event of staying in this macrostate {\color{mogens}throughout $[s,t]$, and conditional on $\mat{X}(s)=\mat{\iay}=(i,\tilde{i})$.}  The matrix $\mat{M}_{ij}(t){\rm d} t$, $j\neq i$, then contains the (infinitesimal) transition probabilities between microstates belonging to macrostates $i$ and $j$, respectively.

{\color{mogens} The paths of the macro process $X_1$ can be represented by its marked point process $(T_i,Y_i)_{i\in \mathbb{N}_0}$, where $T_i$ denotes the time of the $i$'th jump out of a macrostate, and where $Y_i$ denotes the state to which the jump occurs, i.e.\ $Y_i = X_1(T_i)$.
In particular, let  $\mathcal{S}_n = (T_i, Y_i)_{i\leq n}$ denote a sample path consisting of $n$ jumps and which terminates by a jump at time $T_n$ to state $Y_n$. If $\sai_n$ denotes a realisation of $\mathcal{S}_n$ with corresponding jump times $T_i=t_i$ and states $Y_i=y_i$}, then the $d_{y_n}$-dimensional row vector (see \cite[Lemma A.1]{AhmadBladtFurrer}) 
\begin{align}\label{eq:density_sn}
\mat{\alpha}(\sai_n)  
&= \mat{\pi}_1(0)
\prod_{\ell = 0}^{n-1}
\bar{\mat{P}}_{y_{\ell}}(t_{\ell},t_{\ell+1})\mat{M}_{y_{\ell}y_{\ell+1} }(t_{\ell +1})
\end{align} 
provides the {\color{mogens}rate of a jump to state $y_n$ at time $t_n$ on the event that previous jumps times and states are given by $t_i$ and $y_i$, $i\leq n-1$, respectively, and conditionally on initiating in macrostate 1.}

In particular, according to \cite[Remark 4.2]{AhmadBladtFurrer}, the sojourn times are inhomogeneous phase--type distributed (IPH, \cite{Albrecher-Bladt-2019}) and dependent on past jump times and transitions: 
\begin{align}\label{eq:sojourn_iph}
T_{n+1} - T_n \, | \, \mathcal{S}_n \sim \mathrm{IPH}\!\left(\frac{\mat{\alpha}(\mathcal{S}_n)}{\mat{\alpha}(\mathcal{S}_n)\vect{1}_{d_{y_n}} }, \, \mat{M}_{Y_nY_n}(T_n + \bigcdot)\right)\!.  
\end{align}
The corresponding exit rate function out of macrostate $i$ is then given as the column vector function
\begin{align}\label{eq:exit_rates}
\vect{m}_i(t) = -\mat{M}_{ii}(t)\vect{1}_{d_i} = \sum_{j\in \mathcal{J} \atop j\neq i} \mat{M}_{ij}(t)\vect{1}_{d_j},
\end{align}
{\color{mogens} where $\vect{1}_{d}$ denotes the $d$--dimensional column vector of ones. }

Throughout the paper, we pay special attention to the case where the \textit{reset property} introduced in \cite[(3.3)]{AhmadBladtFurrer} {\color{mogens} holds}. Here $\mat{M}_{ij}(t)$, $j \neq i$, {\color{mogens} is a rank one matrix} on the form
\begin{align}\label{eq:indep_cond}
\mat{M}_{ij}(t) = \vect{\beta}_{ij}(t) \vect{\pi}_j(t),
\end{align}
where $\vect{\beta}_{ij}(t)$ is a $d_i$--dimensional non-negative column vector function and $\vect{\pi}_j(t)$ is a $d_j$--dimensional non-negative row vector function with $\vect{\pi}_j(t) \vect{1}_{d_j} = 1$. {\color{mogens} Hence, coming from a microstate within macrostate $i$, the microstate within macrostate $j$ is picked indepently of all previous history. This can be seen as the process restarting or resetting itself at the time of such a transition. }

{\color{mogens} In case of resetting in all states}, 
\begin{align}\label{eq:exit_rates_indep}
\vect{m}_i(t) = \sum_{j\in \mathcal{J} \atop j\neq i}  \vect{\beta}_{ij}(t)\vect{\pi}_j(t)\vect{1}_{d_j} =  \sum_{j\in \mathcal{J} \atop j\neq i} \vect{\beta}_{ij}(t) .
\end{align}

{\color{mogens} 
\begin{remark}
In a life insurance context, the reset property \eqref{eq:indep_cond} 
means that leaving a macrostate $i$ by a jump out of a particular microstate does not influence which microstate is picked within the new macrostate $j$. Consequently, the total transition rate out of a macrostate $i$, \eqref{eq:exit_rates_indep}, is not influenced by the initiations of the target macrostates to which it jumps. \demoo
\end{remark}
}

Hence, according to \cite[Remark 4.5]{AhmadBladtFurrer}, the conditional sojourn time distributions \eqref{eq:sojourn_iph} become independent of past jump times and transitions: 
\begin{align}\label{eq:sojourn_iph_reset}
T_{n+1} - T_n \, | \, \mathcal{S}_n \sim \mathrm{IPH}\!\left(\vect{\pi}_{Y_n}(T_n), \, \mat{M}_{Y_nY_n}(T_n+\bigcdot)\right). 
\end{align}
These simplifications imply a specific time-inhomogeneous semi-Markovian structure to the macrostate process {\color{mogens} $X_1$}, cf.\ \cite[Subsection 4.2]{AhmadBladtFurrer}, which explains the focus on these type of models in our numerical example in Section \ref{sec:num}. 

In this paper, we develop methods for statistical fitting of the aggregate Markov model, namely estimation of the micro intensities $\mu_{\mat{\iay}\mat{\jay}}$ based on independent observations of the trajectories of the macrostate process {\color{mogens} $X_1$}.\ Since this leads to incomplete data with respect to the underlying macro-micro state process $\mat{X}$, we employ an expectation-maximization (EM) algorithm to obtain maximum likelihood estimations of the micro intensities. We develop a general EM algorithm and implement it in the case where the reset property \eqref{eq:indep_cond} is satisfied, along with piecewise constant transition rates.  

\subsection{Piecewise constant transition rates}\label{subsec:piecewise}
Following \cite[Section 2.1]{IPH_Piecewise},  suppose that the transition rates are piecewise constant on a grid $s_0 = 0 < s_1 < \cdots < s_{K-1} < \infty = s_K$ of $K$ time points, $K\in \mathbb{N}$, with values
\begin{align}\label{eq:rates_piecewise}
\begin{split}
\mat{M}(s) &= \mat{M}^k = \left\{ \mu_{\mat{\iay}\mat{\jay}}^k\right\}_{\mat{\iay},\mat{\jay}\in E}, \quad s\in (s_{k-1}, s_k], \quad k\in \{1,\ldots,K\}.
\end{split}
\end{align} 
Introducing $k(x)$ as the unique $k\in \{1,\ldots,K\}$ satisfying that $x\in (s_{k-1},s_k]$, we now have that the transition (sub--)probability matrix functions within macrostates \eqref{eq:barP} simplify to a product of matrix exponentials on the form 
\begin{align}\label{eq:barP_piecewise}
\bar{\mat{P}}_i(s,t) = {\rm e}^{\mat{M}_{ii}^{k(s)}\left(s_{k(s)}-s\right)}\!\left(\,\prod_{\ell = k(s)+1}^{k(t)-1}{\rm e}^{\mat{M}_{ii}^{\ell}(s_{\ell}-s_{\ell-1})} \right)\!  {\rm e}^{\mat{M}_{ii}^{k(t)}\left(t-s_{k(t)-1}\right)},
\end{align} 
with the convention that empty product integrals equal the identity matrix. The defective distribution \eqref{eq:density_sn} at time $t_n$ then also simplify, to  
\begin{align}\label{eq:density_sn_piecewise}
\mat{\alpha}(\sai_n) &= \mat{\pi}_1(0)
\prod_{\ell = 0}^{n-1}\bar{\mat{P}}_{y_{\ell}}(t_{\ell},t_{\ell+1})\mat{M}^{k(t_{\ell+1})}_{y_{\ell}y_{\ell + 1} },
\end{align} 
with the matrices $\bar{\mat{P}}_{y_\ell}(t_\ell, t_{\ell+1})$ being on the form \eqref{eq:barP_piecewise}.\ 
We give special attention to this case being satisfied along with the reset property when we develop our algorithms in this paper, as they will provide simplifications similar to those in \cite{IPH_Piecewise}.\  

Here it may be noted that if the resulting exit rates \eqref{eq:exit_rates} are different between two sub--intervals, the density of the conditional sojourn time distributions \eqref{eq:sojourn_iph} become discontinuous at the corresponding grid point between the two sub--intervals. This follows by similar arguments as those made in \cite[Subsection 2.1]{IPH_Piecewise}.\    


\section{The case of complete micro data}\label{sec:complete_data}
We now consider the complete data case where trajectories of the underlying macro-micro state process $\mat{X}$ are fully observed, which corresponds to methods known from inference of time-inhomogeneous Markov jump processes on finite state spaces; we refer to \cite{Andersen} for a detailed exposition on this. The approach and notation of this section largely follow that of \cite[Section 3.1--3.2]{IPH_Piecewise}. 

{\color{mogens}
  In this paper, the computational approach we take, and implement, uses an approximation of transition rates by piecewise constant functions and assumes the reset property to avoid a full--trajectory history. First, we set up the general framework, where the basic formulas are presented.  }

\subsection{{\color{mogens}Notation and general considerations}}\label{subsec:comple_data_general}
Suppose that we observe $N\in \N$ i.i.d.\ realizations of the Markov jump process $\mat{X}$ of macro--micro states on some time interval $[0,T]$, where $T>0$ is a given and fixed time horizon; represent the (fictive) data by $\mat{X} = (\mat{X}^{(1)},\ldots,\mat{X}^{(N)})$.\ Denote with $\mat{N} = (N^{(1)},\ldots, N^{(N)})$ the corresponding data of the multivariate counting process associated to $\mat{X}$, that is, $N^{(n)}$, $n=1,\ldots,N$, have components
\begin{align*}
N^{(n)}_{\mat{\iay}\mat{\jay}}(t)=\# \tuborg{s\in (0,t] : \mat{X}^{(n)}(s-)=\mat{\iay}, \ \mat{X}^{(n)}(s)=\mat{\jay}}.
\end{align*}
Parametrising the transition rates on the micro level with a parameter vector $\bm{\theta}\in \bm{\Theta}$, where $\bm{\Theta}$ is some finite--dimensional, suitably regular parameter space with non--empty interior, such that  
\begin{align*}
\mat{M}(s) &= \mat{M}(s; \mat{\theta}),
\end{align*}
we have that the likelihood function for the joint parameter $(\vect{\pi}_1, \mat{\theta})$ is given by
\begin{align}\label{eq:likelihood_general}
\begin{split}
\mathcal{L}^{\mat{X}}(\vect{\pi}_1, \mat{\theta}) 
&= 
\mathcal{L}^{\mat{X}}_0(\vect{\pi}_1)
\prod_{\mat{\iay},\mat{\jay}\in E \atop \mat{\jay}\neq \mat{\iay} } 
\mathcal{L}^{\mat{X}}_{\mat{\iay}\mat{\jay}}(\mat{\theta}), 
\\[0.2 cm]
\mathcal{L}^{\mat{X}}_0(\vect{\pi}_1) 
&=  
\prod_{r= 1}^{d_1}\pi_{(1,r)}(0)^{B_{(1,r)}(0) }, \\[0.2 cm]
\mathcal{L}^{\mat{X}}_{\mat{\iay}\mat{\jay}}(\mat{\theta}) 
&= 
\exp\bigg(\int_{(0,T]} \log\!\left(\mu_{\mat{\iay}\mat{\jay} }(s;\bm{\theta})\right)\!\!\md N_{\mat{\iay}\mat{\jay}}(s)  - \int_0^T I_{\mat{\iay}}(s) \mu_{\mat{\iay}\mat{\jay}}(s;\bm{\theta})\!\md s    \bigg),
\end{split}
\end{align}
where {\color{mogens}$B_{(1,r)}(0)$ is the number of processes initiating in state $(1,r)$, and}, for $\mat{\iay},\mat{\jay}\in E$, $\mat{\jay}\neq \mat{\iay}$, and $s\in [0,T]$, 
\begin{align}\label{eq:start_obs_complete}
I_{\mat{\iay}}(s) = \sum_{n=1}^N \mathds{1}_{(\mat{X}^{(n)}(s) = \mat{\iay})  } \qquad \text{and} \qquad 
N_{\mat{\iay}\mat{\jay}}(s) = \sum_{n=1}^N N^{(n)}_{\mat{\iay}\mat{\jay}}(s) .
\end{align}

Then $I_{\mat{\iay}}(s)$ gives the number of observations in  state $\mat{\iay}$ at time $s$, while $N_{\mat{\iay}\mat{\jay}}(s)$ gives the total number of jumps observed from state $\mat{\iay}$ to $\mat{\jay}$ on $[0,s]$.

The corresponding log-likelihood $L^{\mat{X}}(\vect{\pi}_1, \mat{\theta}) = \log \mathcal{L}^{\mat{X}}(\vect{\pi}_1, \mat{\theta})$ is  given by\medskip
\begin{align}\label{eq:log-likelihood_general}
L^{\mat{X}}(\vect{\pi}_1, \mat{\theta}) &= 
L^{\mat{X}}_0(\vect{\pi}_1)+\sum_{\mat{\iay},\mat{\jay}\in E \atop \mat{\jay}\neq \mat{\iay} } L^{\mat{X}}_{\mat{\iay}\mat{\jay}}(\mat{\theta}), \\[0.2 cm]\nonumber
L^{\mat{X}}_0(\vect{\pi}_1) &=  \sum_{r= 1}^{d_1}B_{(1,r)}(0) \log(\pi_{(1,r)}(0)), \\[0.2 cm]\label{eq:log-likelihood_general_statewise}
L^{\mat{X}}_{\mat{\iay}\mat{\jay}}(\mat{\theta}) &= \int_{(0,T]} \log\!\left(\mu_{\mat{\iay}\mat{\jay} }(s;\bm{\theta})\right)\!\!\md N_{\mat{\iay}\mat{\jay}}(s)  -\int_0^T I_{\mat{\iay}}(s) \mu_{\mat{\iay}\mat{\jay}}(s;\bm{\theta})\!\md s,
\end{align}
and the MLE of $(\vect{\pi}_1, \mat{\theta})$ is then found by maximizing the log-likelihood: 
\begin{align*}
( \hat{\vect{\pi}}_1, \hat{\mat{\theta}}) = \underset{(\vect{\pi}_1,\, \mat{\theta})}{\mathrm{arg\, max}}  \ L^{\mat{X}}(\vect{\pi}_1, \mat{\theta}).
\end{align*}
The product structure of the likelihood  \eqref{eq:likelihood_general} in $\vect{\pi}_1$ and $\mat{\theta}$ gives that we can estimate these separately via their respective likelihoods $\mathcal{L}^{\mat{X}}_0$ and $\mathcal{L}_{\mat{\iay}\mat{\jay}}^{\mat{X}}$, $\mat{\iay},\mat{\jay}\in E$, $\mat{\jay}\neq \mat{\iay}$.\ For $\vect{\pi}_1$, one realizes that the likelihood $\mathcal{L}^{\mat{X}}_0$ is proportional to the likelihood obtained from viewing $(B_{(1,1)}(0),\ldots,B_{(1,d_1)}(0))$ as an observation from the $\mathrm{Multinomial}(N,\vect{\pi}_1(0))$--distribution, where $N$ is considered fixed. Hence, the MLE of $\vect{\pi}_1$ is explicitly given by
\begin{align}\label{eq:mle_pi}
\hat{\pi}_{(1,r)}(0) &= \frac{B_{(1,r)}(0)}{N}.
\end{align}
The MLE of $\mat{\theta}$ is then given by
\begin{align*}
\hat{\mat{\theta}} &= \underset{\mat{\theta}}{\mathrm{arg\, max}}  \sum_{\mat{\iay},\mat{\jay}\in E \atop \mat{\jay}\neq \mat{\iay} } L^{\mat{X}}_{\mat{\iay}\mat{\jay}}(\mat{\theta}),
\end{align*}  
which, in general, requires numerical optimisation methods. Similar discussions are seen in, e.g., \cite[Section 3.1]{IPH_Piecewise}.

\subsection{Reset property}\label{subsec:comple_data_reset}
We now assume the reset property \eqref{eq:indep_cond} is satisfied. The setup remains that of Subsection \ref{subsec:comple_data_general}, except that we now, due to the nature of the exit rates $\vect{\beta}_{ij}$ and initial distributions $\vect{\pi}_j$ playing distinct roles, extend the parameter space to $\mat{\Theta}\times \mat{H}$, such that, for $(\mat{\theta}, \vect{\eta})\in \mat{\Theta}\times \mat{H}$,
\begin{align}
\begin{split}\label{eq:reset_parameter_general}
\mat{M}_{ij}(s; \mat{\theta},\vect{\eta}) &= \vect{\beta}_{ij}(s;\mat{\theta})\vect{\pi}_{j}(s;\vect{\eta}), \qquad j\neq i.
\end{split}
\end{align}
This parameterisation allows for separate estimations of exit rates and initial distributions within the reset property. Note that we implicitly also set $\vect{\pi}_1(0) = \vect{\pi}_1(0; \vect{\eta})$, so that we allow for the possibility of $\vect{\pi}_1(0)$ to be regressed against the other initial distributions at the different time points.  

Having this setup, we see by splitting the 
likelihood contributions for the different transitions, $\mathcal{L}^{\mat{X}}_{\mat{\iay}\mat{\jay}}$, between those within macrostates and those between macrostates that the likelihood \eqref{eq:likelihood_general} now simplifies to, using that $\vect{\pi}_j(s; \vect{\eta})\vect{1}_{d_j} = 1$ for all $j$,
\begin{align*}
\mathcal{L}^{\mat{X}}(\mat{\theta}, \vect{\eta}) &= 
\prod_{\mat{\iay}\in E}\mathcal{L}^{\mat{X}}_{\mat{\iay}}(\vect{\eta})\prod_{\check{i} = 1\atop \check{i}\neq \tilde{i}}^{d_i} \mathcal{L}^{\mat{X}}_{\mat{\iay}(i,\check{i})}(\mat{\theta})\prod_{j\in \mathcal{J}\atop j\neq i}\mathcal{L}^{\mat{X}}_{\mat{\iay}j}(\mat{\theta}),
\end{align*}
with $\mathcal{L}^{\mat{X}}_{\mat{\iay}(i,\check{i})}$ as in \eqref{eq:likelihood_general}, and 
\begin{align}\label{eq:likelihood_reset_init_jump}
\mathcal{L}^{\mat{X}}_{\mat{\iay}}(\vect{\eta}) &= \exp\bigg(\int_{[0,T]} \log\left(\pi_{\mat{\iay}}(s ; \vect{\eta})\right)\!\!\md N_{\mat{\iay}}(s)\bigg),\\[0.2 cm]\label{eq:likelihood_reset_init_jump2}
\mathcal{L}^{\mat{X}}_{\mat{\iay}j}(\mat{\theta}) &= \exp\bigg(\int_{(0,T]} \log\left(\beta_{\mat{\iay}j}(s ; \mat{\theta})\right)\!\!\md N_{\mat{\iay}j}(s)-\int_0^T I_{\mat{\iay}}(s) \beta_{\mat{\iay}j}(s ; \mat{\theta})\md s\bigg),
\end{align}
where we define $N_{\mat{\iay}j}$ and $N_{\mat{\iay}}$  as the aggregated processes
\begin{align}\label{eq:agg_N_reset_general}
N_{\mat{\iay}j}(s) = \sum_{\tilde{j} = 1}^{d_j}N_{\mat{\iay}\mat{\jay}}(s)
\quad   \text{and} \quad 
N_{\mat{\iay}}(s) &= \begin{cases}
\sum_{\mat{\jay}\in E\atop j\neq i}N_{\mat{\jay}\mat{\iay}}(s)  \quad
&\text{for} \ s>0, \\[0.2 cm]
B_{\mat{\iay}}(0) &\text{for}\ s=0\ \text{and}\ i=1,\\[0.2 cm]
0 &\text{Otherwise}.
\end{cases}
\end{align}
Note that we rather untraditionally, but for notational convenience, couple the number of initiations at time $0$ with the counting process counting the number of jumps into a macrostate in our definition of $N_{\mat{\iay}}$.\ This is related to the aforementioned possibility of regressing the initial distribution $\vect{\pi}_1(0;\vect{\eta})$ at time $0$ with the other initial distributions $\vect{\pi}_j(\cdot ; \vect{\eta})$, where this definition allows us to unify computations.   

The corresponding log-likelihood \eqref{eq:log-likelihood_general} takes the form
\begin{align}\label{eq:log-likelihood_reset}
L^{\mat{X}}(\mat{\theta}, \vect{\eta}) &= 
\sum_{\mat{\iay}\in E}\Bigg(L^{\mat{X}}_{\mat{\iay}}(\vect{\eta}) + \sum_{\check{i} = 1\atop \check{i}\neq \tilde{i}}^{d_i} L^{\mat{X}}_{\mat{\iay}(i,\check{i})}(\mat{\theta}) + \sum_{j\in \mathcal{J}\atop j\neq i}L^{\mat{X}}_{\mat{\iay}j}(\mat{\theta})   \Bigg),
\end{align}
with $L^{\mat{X}}_{\mat{\iay}(i,\check{i})}$ as in \eqref{eq:log-likelihood_general_statewise}, and 
\begin{align}
\begin{split}\label{eq:log-likelihood_reset_init_jump}
L^{\mat{X}}_{\mat{\iay}}(\vect{\eta}) &=  \int_{[0,T]} \log\left(\pi_{\mat{\iay}}(s ; \vect{\eta})\right)\!\!\md N_{\mat{\iay}}(s),\\[0.2 cm]
L^{\mat{X}}_{\mat{\iay}j}(\mat{\theta}) &=  \int_{(0,T]} \log\left(\beta_{\mat{\iay}j}(s ; \mat{\theta})\right)\!\!\md N_{\mat{\iay}j}(s)-\int_0^T I_{\mat{\iay}}(s) \beta_{\mat{\iay}j}(s ; \mat{\theta})\md s.
\end{split}
\end{align}
The MLE of $(\mat{\theta},\vect{\eta})$ is then found by maximizing the relevant log-likelihood contributions:\ 
\begin{align*}
\hat{\mat{\theta}} &= 
\underset{\mat{\theta}}{\mathrm{arg\, max}}  
\sum_{\mat{\iay}\in E}\Bigg(\sum_{\check{i} = 1\atop \check{i}\neq \tilde{i}}^{d_i} L^{\mat{X}}_{\mat{\iay}(i,\check{i})}(\mat{\theta}) + \sum_{j\in \mathcal{J}\atop j\neq i}L^{\mat{X}}_{\mat{\iay}j}(\mat{\theta})\Bigg), \\[0.3 cm]
\hat{\vect{\eta}} &= \underset{\vect{\eta}}{\mathrm{arg\, max}}\, \sum_{\mat{\iay}\in E} L^{\mat{X}}_{\mat{\iay}}(\vect{\eta}),
\end{align*}
which also, here, in general, requires numerical optimisation methods. 
\subsection{Piecewise constant transition rates}\label{subsec:comple_data_piecewise_general}
Consider again the general case of Subsection \ref{subsec:comple_data_general}, and assume now that the transition intensity matrix function $\mat{M}(\cdot ; \mat{\theta})$ is piecewise constant on the form \eqref{eq:rates_piecewise}.\ Then the likelihood contributions for the transitions between states, $\mathcal{L}^{\mat{X}}_{\mat{\iay}\mat{\jay}}$,  simplify to: 
\begin{align}\label{eq:likelihood_piecewise_statewise}
\mathcal{L}^{\mat{X}}_{\mat{\iay}\mat{\jay}}(\mat{\theta}) &= \prod_{k=1}^{K} \left(\mu_{\mat{\iay}\mat{\jay}}^k( \mat{\theta})\right)^{O_{\mat{\iay}\mat{\jay}}(k)}\exp\!\left(-\mu_{\mat{\iay}\mat{\jay}}^k( \mat{\theta})E_{\mat{\iay}}(k)\right)\!,
\end{align} 
where $O_{\mat{\iay}\mat{\jay}}(k)$ is the total number of \textit{occurrences} of transitions from state $\mat{\iay}$ to $\mat{\jay}$ in the time interval $(s_{k-1},s_{k}]$, and $E_{\mat{\iay}}(k)$ is the total time spent in state $\mat{\iay}$ in the time interval $(s_{k-1},s_{k}]$, the so-called \textit{exposure}, given by
\begin{align}
\begin{split}\label{eq:OE}
O_{\mat{\iay}\mat{\jay}}(k) =  \int_{(s_{k-1},s_k]} \md N_{\mat{\iay}\mat{\jay}}(t) \qquad \text{and} \qquad
E_{\mat{\iay}}(k) =  \int_{s_{k-1}}^{s_{k}} I_{\mat{\iay}}(t)  \md t.
\end{split}
\end{align}
The corresponding log--likelihood contributions take the form 
\begin{align}\label{eq:log-likelihood_piecewise_statewise}
L^{\mat{X}}_{\mat{\iay}\mat{\jay}}(\mat{\theta}) &= 
\sum_{k=1}^{K} 
\left(O_{\mat{\iay}\mat{\jay}}(k)\log\!\big(\mu_{\mat{\iay}\mat{\jay}}^k( \mat{\theta})\big)-\mu_{\mat{\iay}\mat{\jay}}^k( \mat{\theta})E_{\mat{\iay}}(k)\right)\!.
\end{align}
Thus, in the case of piecewise constant transition rates, the occurrences and exposures in the different time intervals, along with the number of initiations in the different microstates of macrostate 1, 
\begin{align*}
\left\{
\left(B_{(1,r)}(0), \,
	  O_{\mat{\iay}\mat{\jay}}(k),\, 
	  E_{\mat{\iay}}(k) 
\right)
\right\}_{
k\in\{1,\ldots,K\},\, r\in\{1,\ldots,d_1\},\, \mat{\iay},\mat{\jay}\in E, \, \mat{\jay}\neq \mat{\iay}
},
\end{align*}
are sufficient statistics. One even notes that the resulting likelihood, \eqref{eq:likelihood_general} combined with \eqref{eq:likelihood_piecewise_statewise}, is proportional to the likelihood obtained from independent observations 
\begin{align}\label{eq:stikproever_complete_general}
\begin{split}
&\left(B_{(1,1)}(0), \ldots, B_{(1,d_1)}(0)\right)\!, \\[0.2 cm]
&\left(O_{\mat{\iay}\mat{\jay}}(k),\quad  k\in\{1,\ldots,K\}, \ \ \mat{\iay},\mat{\jay}\in E,\ \mat{\jay}\neq \mat{\iay}\right)\!, 
\end{split}
\end{align}
where 
\begin{align}\label{eq:regr_complete_general}
\begin{split}
\left(B_{(1,1)}(0), \ldots, B_{(1,d_1)}(0)\right) \quad &  \text{is} \quad \mathrm{Multinomial}(N, \bm{\pi}_1(0)) - \text{distributed},\\[0.2 cm]
O_{\mat{\iay}\mat{\jay}}(k) \quad  & \text{is} \quad \mathrm{Poisson}\!\left(E_{\mat{\iay}}(k)\mu_{\mat{\iay}\mat{\jay}}^k(\mat{\theta})\right) - \text{distributed}, 
\end{split}
\end{align}
with $N$ and $E_{\mat{\iay}}(k)$ considered fixed. Hence, the MLE of $\mat{\pi}_1(0)$ remains explicitly given by \eqref{eq:mle_pi}, while the MLE of $\mat{\theta}$ can be obtained from Poisson regressions of the occurrences against the times on the grid, which can be carried using standard software packages. For example, if the intensities $\mu_{\mat{\iay}\mat{\jay}}^k(\mat{\theta})$ are exponential functions of $\mat{\theta}$, a Poisson regression with log--link function and log--exposure as offset can be carried out, corresponding to fitting the model  
\begin{align}\label{eq:pois-regr}
\log (\mu_{\mat{\iay}\mat{\jay}}(s; \mat{\theta})) 
= 
\sum_{r = {\color{mogens}0}}^q \theta_{\mat{\iay}\mat{\jay}}^{(r)}g^{(r)}(s),
\end{align}
for suitably regular known functions $g^{(r)}$, with a common choice being $g^{(r)}(s) = s^r${\color{mogens}, and where $\mat{\theta}=\left( \theta_{\mat{\iay}\mat{\jay}}^{(r)} \right)_{\mat{\iay}\mat{\jay}\in E,\, \mat{\jay}\neq \mat{\iay},\, r=0,...,q}$}.\ The predictions at $s_k$ then constitutes the MLEs of the intensities. 
 
In the special case where each of the parameters in $\mat{\theta}$ are the transition rates in the different time intervals, that is, $\mat{\theta} = \left(\theta_{\mat{\iay}\mat{\jay}}^k\right)_{
\mat{\iay},\mat{\jay}\in E, \, \mat{\jay}\neq \mat{\iay},\, k=1,\ldots,K 
}$ such that
\begin{align*}
\mu_{\mat{\iay}\mat{\jay}}^k(\mat{\theta}) &= \theta_{\mat{\iay}\mat{\jay}}^k,
\end{align*} 
the MLE of $\mat{\theta}$ simplify to so-called occurrence--exposure rates (cf.\ also \cite{AsmussenEM, IPH_Piecewise}):
\begin{align*} 
\hat{\theta}_{\mat{\iay}\mat{\jay}}^k = \frac{O_{\mat{\iay}\mat{\jay}}(k)}{E_{\mat{\iay}}(k)}.
\end{align*} 
This can be seen as a direct ``non--parametric'' approach to estimate the micro intensities in the different time intervals, which then is a special case of the general parametric approach. The assumption of piecewise constant transition rates is often seen as an approximation to continuous versions obtained when the number of grid points tends to infinity. The observations of this subsection largely follow the remarks made in \cite[Section 5]{Aalen2008}. 

\subsection{Reset property with piecewise constant transition rates}\label{subsec:comple_data_piecewise_reset}
We now assume the reset property \eqref{eq:indep_cond} in combination with piecewise constant transition rates on the form \eqref{eq:rates_piecewise}, so that for $j\neq i$, $k\in \{1,\ldots,K\}$, and $s\in (s_{k-1},s_k]$,
\begin{align*}
\vect{\beta}_{ij}(s; \mat{\theta}) &= \vect{\beta}_{ij}^k(\mat{\theta}), \\[0.2 cm]
\vect{\pi}_{j}(s; \vect{\eta}) &= \vect{\pi}_{j}^k(\vect{\eta}),
\end{align*}
with $\vect{\pi}_{1}(0;\vect{\eta}) = \vect{\pi}_{1}^0(\vect{\eta})$.\ The transition rates between macrostates are then on the form
\begin{align}\label{eq:reset_piecewise}
\mat{M}^k_{ij}(\mat{\theta}, \vect{\eta}) &= \vect{\beta}_{ij}^k(\mat{\theta})\vect{\pi}_{j}^k(\vect{\eta}).
\end{align}
In this case, the likelihood contributions for transitions between macrostates \eqref{eq:likelihood_reset_init_jump}--\eqref{eq:likelihood_reset_init_jump2} simplify to  
\begin{align}\label{eq:likelihood_reset_piecewise_statewise}
\begin{split}
\mathcal{L}^{\mat{X}}_{\mat{\iay}}(\vect{\eta}) &= \prod_{k=0}^K \pi_{\vect{\iay}}^k(\vect{\eta})^{B_{\mat{\iay}}(k)}, \\[0.2 cm]
\mathcal{L}^{\mat{X}}_{\mat{\iay}j}(\mat{\theta}) &= \prod_{k=1}^K \beta_{\mat{\iay}j}^k(\mat{\theta})^{O_{\mat{\iay}j}(k)}\exp\!\left(-\beta_{\mat{\iay}j}^k(\mat{\theta})E_{\mat{\iay}}(k)\right)\!, \\[0.2 cm]
\end{split}
\end{align}
where, for $k\in \{1,\ldots,K\}$, $B_{\mat{\iay}}(k)$ is the total number of initiations in microstate $\tilde{i}$ in the time interval $(s_{k-1},s_k]$ resulting from jumps into macrostate $i$, and $O_{\mat{\iay}j}(k)$ is the
total number of transitions in time interval $(s_{k-1},s_k]$ from macrostate $i$ to $j$ happening from microstate $\tilde{i}$:
\begin{align}\label{eq:Occurrences_reset}
B_{\mat{\iay}}(k) = \sum_{\mat{\jay}\in E\atop j\neq i} O_{\mat{\jay}\mat{\iay}}(k)\qquad \text{and} \qquad O_{\mat{\iay}j}(k) = \sum_{\tilde{j} = 1}^{d_j} O_{\mat{\iay}\mat{\jay}}(k) .
\end{align}

The corresponding log--likelihood contributions simplify to 
\begin{align}\label{eq:log-likelihood_reset_piecewise_statewise}
L^{\mat{X}}_{\mat{\iay}}(\vect{\eta}) &= \sum_{k=0}^K B_{\mat{\iay}}(k)\log\!\big(\pi_{\vect{\iay}}^k(\vect{\eta})\big), \\[0.2 cm]\label{eq:log-likelihood_reset_piecewise_statewise2}
L^{\mat{X}}_{\mat{\iay}j}(\mat{\theta}) &= \sum_{k=1}^K \left(O_{\mat{\iay}j}(k)\log\!\big(\beta_{\mat{\iay}j}^k( \mat{\theta})\big)-\beta_{\mat{\iay}j}^k( \mat{\theta})E_{\mat{\iay}}(k)\right)\!. 
\end{align}
Consequently, in the case where the reset property is satisfied in combination with piecewise constant transition rates, \eqref{eq:rates_piecewise} and \eqref{eq:reset_piecewise}, the sufficient statistics regarding the occurrences between macrostates reduce to those of \eqref{eq:Occurrences_reset}.\ In fact, by inserting the simplified likelihood contributions \eqref{eq:likelihood_reset_piecewise_statewise}   into the general piecewise constant case \eqref{eq:likelihood_piecewise_statewise}, which again are inserted into the general likelihood \eqref{eq:likelihood_general}, we realise that it now is proportional to the likelihood obtained from independent observations 
\begin{align*}
&\left(B_{\mat{\iay}}(k),\qquad\qquad\qquad\ \  k=0,\ldots,K, \ \ \mat{\iay}\in E\right)\!, \\[0.2 cm]
&\left(\big(O_{\mat{\iay}(i,\check{i})}(k), O_{\mat{\iay}j}(k)\big),   \quad  k=1,\ldots,K, \ \ \mat{\iay}\in E, \ \check{i}\in \{1,\ldots,d_i\}, \ \check{i}\neq \tilde{i},\  j\in \mathcal{J},\  j\neq i\right)\!, 
\end{align*}
where  
\begin{align*}
\left(B_{(i,1)}(k),\ldots,B_{(i,d_i)}(k)\right) \quad  & \text{is} \quad \mathrm{Multinomial}\!\left(B_i(k), \bm{\pi}^k_i(\vect{\eta})\right) - \text{distributed},\\[0.2 cm]
O_{\mat{\iay}(i,\check{i})}(k) \quad  & \text{is} \quad \mathrm{Poisson}\!\left(E_{\mat{\iay}}(k)\mu_{\mat{\iay}(i,\check{i})}^k(\mat{\theta})\right) - \text{distributed},
\\[0.2 cm]
O_{\mat{\iay}j}(k)\quad  & \text{is} \quad \mathrm{Poisson}\!\left(E_{\mat{\iay}}(k)\beta_{\mat{\iay}j}^k(\mat{\theta})\right) - \text{distributed},
\end{align*}
with $B_i(k) = \sum_{\tilde{i}=1}^{d_i} B_{\mat{\iay}}(k)$, $k\in \{1,\ldots,K\}$, being the total number of jumps to macrostate $i$ observed in $(s_{k-1},s_k]$;\ for $k=0$ and $i = 1$, we have $B_1(0) = N$, cf.\ also \eqref{eq:regr_complete_general}. Hence, the MLE of $\mat{\theta}$ is obtained from similar kinds of Poisson regressions as those in Subsection \ref{subsec:comple_data_piecewise_general}, but the MLE of $\vect{\eta}$ can now be obtained from Multinomial regressions of the number of initiations against the times on the grid, which also can be carried using standard software packages. 

For example, if for a fixed macrostate $i\in \mathcal{J}$, the probabilities $\pi_{\mat{\iay}}^k(\vect{\eta})$ are exponential functions of $\vect{\eta}$ (relative to the probability $\pi_{(i,d_i)}^k(\vect{\eta})$ in the last microstate $d_i$, say), then a Multinomial logistic regression can be carried out, corresponding to fitting the model 
\begin{align}\label{eq:mult-regr}
\pi_{\mat{\iay}}(s; \vect{\eta}) = \frac{
\exp\!\left(\mathds{1}_{(\tilde{i}\neq d_i)}\sum_{r={\color{mogens} 0} }^q \eta_{\mat{\iay}}^{(r)}g^{(r)}(s)    \right)
}{
1+\sum_{\check{i}=1}^{d_i-1}
\exp\!\left(\sum_{r={\color{mogens} 0} }^q \eta_{(i,\check{i})}^{(r)}g^{(r)}(s)    \right)
},
\end{align}
where the functions $g^{(r)}$ are as in \eqref{eq:pois-regr}.\ The predictions at $s_k$ then constitute the MLEs of the initial distributions.  A similar type of Multinomial logistic regressions for initial distributions of (inhomogeneous) phase--type distributions are performed in \cite{bladtyslas_experts, Albrecher-Bladt-Muller}, although in the context of covariate information.


\section{EM algorithm for the aggregate Markov model}\label{sec:em}

In this section, we give the main contributions of the paper, namely the maximum likelihood
estimation of micro intensities using the expectation-maximization (EM) algorithm. {\color{mogens} While we only present the algorithm, and apply to data, in the case where the reset property holds, we start by introducing some general results that will both set up the notation and clarify the importance of the reset property. This will extend 
the results of \cite{IPH_Piecewise}.
}         
%
%

\subsection{{\color{mogens}Notation and general considerations}}\label{subsec:EM_general}

The macro data we observe are $N$ i.i.d.\ realisations of the macrostate process {\color{mogens} $X_1$} on the generic time interval $[0,T]$.\ It shall be useful to represent the data via the associated marked point process $(T_i,Y_i)_{i\in \N_0}$ to keep track of jump times and transitions. This is also the approach of, e.g.\ \cite{AsmussenEM}. 

Denote with $\mathcal{S}^{(n)} = \left(T^{(n)}, Y^{(n)}\right) = (T_i^{(n)}, Y_i^{(n)})_{i\leq M^{(n)}}$ the $n$'th observation, $n=1,\ldots,N$, of jump times and transitions of the macrostate process {\color{mogens} $X_1$}, where $M^{(n)}$ is the number of transitions observed, so that $T = \max_{n=1,\ldots,N} T^{(n)}_{M^{(n)}}$.\ Represent all observed data by the vector  
\begin{align*}
\vect{\mathcal{S}} = \left(\mathcal{S}^{(1)},\ldots,\mathcal{S}^{(N)}\right)\!.
\end{align*}
Let $\mathbb{E}_{(\vect{\pi}_1, \mat{\theta})}$ denote the expectation under which the Markov jump process $\mat{X}$ of macro-micro states has transition intensity matrix function $\mat{M}(\cdot ; \mat{\theta})$ and initial distribution $(\vect{\pi}_1(0), \vect{0})$.\ The EM algorithm for estimation of the micro--level parameter $(\vect{\pi}_1, \mat{\theta})$ then consists of initialising with some value $(\vect{\pi}^{(0)}_1, \mat{\theta}^{(0)}) \in [0,1]^{d_1}\times \mat{\Theta}$, and then iteratively compute the conditional expected log-likelihood given macro data under some current parameter $(\vect{\pi}^{(m)}_1,\, \mat{\theta}^{(m)})$, the so-called E--step,
\begin{align}\label{eq:cond_log-likelihood_general_def}
(\vect{\pi}_1, \mat{\theta}) \mapsto \bar{L}^{(m)}(\vect{\pi}_1,\, \mat{\theta}) = \mathbb{E}_{(\vect{\pi}^{(m)}_1,\, \mat{\theta}^{(m)})}\!\!\left[\left.  L^{\mat{X}}(\vect{\pi}_1, \mat{\theta}) \, \right|  \vect{\mathcal{S}} \right]\!, \quad m\in \mathbb{N}_0,       
\end{align}
and then maximize this to update the parameter to $(\vect{\pi}^{(m+1)}_1, \mat{\theta}^{(m+1)})$, the so-called M--step. For notational convenience, we write, under some parameter $(\vect{\pi}^{(m)}_1, \mat{\theta}^{(m)})$,
\begin{align}\label{eq:barP_theta}
\bar{\mat{P}}^{(m)}_i(s,t) &= \Prodi_s^t \!\left(\bm{I}+\mat{M}_{ii}\big(x; \mat{\theta}^{(m)}\big)\!\md x\right)\!,\quad i\in \mathcal{J},
\end{align}
for the transition (sub--)probability matrix functions within macrostates, and 
\begin{align}\label{eq:density_sn_m}
\mat{\alpha}^{(m)}(\sai_n) &= \mat{\pi}^{(m)}_1(0)
\prod_{\ell = 0}^{n-1}
\bar{\mat{P}}^{(m)}_{y_{\ell}}(t_{\ell},t_{\ell+1})\mat{M}_{y_{\ell}y_{\ell+1} }\big(t_{\ell +1}; \mat{\theta}^{(m)}\big),
\end{align} 
for the corresponding defective distribution at time $t_n$.\ Also, we denote with $\vect{1}_n^{\prime}$ the row vector of ones with the same dimension as $\vect{\alpha}^{(m)}(\mathcal{S}^{(n)})$.  

To obtain the conditional expected log-likelihood given macro data, we need some conditional expected statistics. For $r\in \{1,\ldots,d_1\}$, and $\mat{\iay},\mat{\jay}\in E$, $\mat{\jay}\neq \mat{\iay}$, define 
\begin{align}\label{eq:cond_statistics_general}
\bar{B}^{(m)}_{(1,r)}(0) 
&= 
\mathbb{E}_{(\vect{\pi}^{(m)}_1,\, \mat{\theta}^{(m)})}\!\!\left[\left. B_{(1,r)}(0) \, \right|  \vect{\mathcal{S}} \right]\!, \\[0.2 cm]\label{eq:cond_statistics_general_I}
\bar{I}^{(m)}_{\mat{\iay}}(s) &= \mathbb{E}_{(\vect{\pi}^{(m)}_1,\, \mat{\theta}^{(m)})}\!\left[\left. I_{\mat{\iay}}(s) \, \right|  \vect{\mathcal{S}} \right]\!, \\[0.2 cm]\label{eq:cond_statistics_general_N}
\bar{N}^{(m)}_{\mat{\iay}\mat{\jay}}(s) &= \mathbb{E}_{(\vect{\pi}^{(m)}_1,\, \mat{\theta}^{(m)})}\!\left[\left. N_{\mat{\iay}\mat{\jay}}(s) \, \right|  \vect{\mathcal{S}} \right]\!. 
\end{align}
Introduce the $d_i\times d_i$ matrix function $\vect{c}_i^{(m)}$ and the $d_j\times d_i$ matrix function $\vect{a}_{ij}^{(m,\ell)}$, $i,j\in \mathcal{J}$, $j\neq i$, and $\ell\in \{1,\ldots,n\}$, given by
\begin{align*}
\mat{c}_i^{(m)}\big(u; \sai_{n}\big) &= 
\sum_{\ell = 1}^{n} \mathds{1}_{\left[t_{\ell-1},\, t_{\ell}\right)}(u)\mathds{1}_{(y_{\ell-1}\, =\, i)}\bar{\mat{P}}^{(m)}_{i}\!\left(u, t_{\ell}\right)
\vect{\alpha}^{(m)}_{\ell}\!\left(\sai_{n}\right)    
\vect{\alpha}^{(m)}\!\left(\sai_{\ell-1}\right)\!
\bar{\mat{P}}^{(m)}_{i}\!\left(t_{\ell-1}, u\right)\!,\\[0.3 cm]
\mat{a}_{ij}^{(m, \ell)}\big(u; \sai_{n}\big) &= 
 \mathds{1}_{\left(t_{\ell-1},\, t_{\ell}\right]}(u)\mathds{1}_{(y_{\ell-1}\, = \, i,\, y_{\ell}\, =\, j)}\bar{\mat{P}}^{(m)}_{j}\!\left(u, t_{\ell+1}\right)\!
\vect{\alpha}^{(m)}_{\ell+1}\!\left(\sai_{n}\right)    
\vect{\alpha}^{(m)}\!\left(\sai_{\ell-1}\right)\!
\bar{\mat{P}}^{(m)}_{i}\!\left(t_{\ell-1}, u\right)\!,
\end{align*}
where the $d_{y_{\ell-1}}$--dimensional row vector
\begin{align*}
\vect{\alpha}_{\ell}^{(m)}(\sai_n) 
&=  
\mat{M}_{y_{\ell-1} y_{\ell}}\big(t_{\ell}; \mat{\theta}^{(m)}\big)
\!\left(\,\prod_{r = \ell }^{n-1}
\bar{\mat{P}}^{(m)}_{y_r}(t_r,t_{r+1})\mat{M}_{y_r y_{r+1}}\big(t_{r+1}; \mat{\theta}^{(m)}\big)\right)\!\vect{1}_{d_{y_n}}
\end{align*}
takes care of sample path probabilities from the $\ell$'th jump and onwards. We then have the following main result. 
\begin{theorem}\label{thm:cond_log-lik_general}
The conditional expected log-likelihood given the macro data $\vect{\mathcal{S}}$ under the parameter  $(\vect{\pi}^{(m)}_1, \mat{\theta}^{(m)})$, $m\in \mathbb{N}_0$, is given by
\begin{align}\nonumber
\bar{L}^{(m)}(\vect{\pi}_1, \mat{\theta}) &=
\bar{L}_0^{(m)}(\vect{\pi}_1)  +\sum_{\mat{\iay},\mat{\jay}\in E \atop \mat{\jay}\neq \mat{\iay} } \bar{L}^{(m)}_{\mat{\iay}\mat{\jay}}(\mat{\theta}), \\[0.2 cm]\label{eq:cond_log-lik_general}
\bar{L}_0^{(m)}(\vect{\pi}_1) &= \sum_{r = 1}^{d_1}\bar{B}^{(m)}_{(1,r)}(0) \log(\pi_{(1,r)}(0)), \\[0.2 cm]\nonumber
\bar{L}^{(m)}_{\mat{\iay}\mat{\jay}}(\mat{\theta}) &= \int_{(0,T]} \log\!\left(\mu_{\mat{\iay}\mat{\jay} }(u;\bm{\theta})\right)\!\!\md \bar{N}^{(m)}_{\mat{\iay}\mat{\jay}}\!\left(u\right)  - \int_{0}^{T } \bar{I}_{\mat{\iay}}^{(m)}\!\left(u\right)\! \mu_{\mat{\iay}\mat{\jay}}(u;\bm{\theta})\md u,
\end{align}
where, for $r\in \{1,\ldots,d_1\}$,
\begin{align*}
\bar{B}^{(m)}_{(1,r)}(0)  &= \sum_{n=1}^N \frac{  
\pi_{(1,r)}^{(m)}(0)\, \vect{e}^{\prime}_{r}
\bar{\mat{P}}^{(m)}_1\!\big(0, T_1^{(n)}\big)
\vect{\alpha}^{(m)}_{1}\!\big(\mathcal{S}^{(n)}\big)
}{
\vect{\alpha}^{(m)}\!\big(\mathcal{S}^{(n)}\big)\vect{1}_{n}
},
\end{align*}
while for $\mat{\iay}\in E$ and $\check{i}\in \{1,\ldots,d_i\}$, $\check{i}\neq \tilde{i}$,
\begin{align*}
\bar{I}_{\mat{\iay}}^{(m)}(u) &=
\sum_{n=1}^N \frac{\vect{e}_{\tilde{i}}^{\prime}\mat{c}_i^{(m)}\!\!\left(u; \mathcal{S}^{(n)}\right)\!\vect{e}_{\tilde{i}}
}{
\vect{\alpha}^{(m)}\!\big(\mathcal{S}^{(n)}\big)\vect{1}_{n}
},
\\[0.6 cm]
\md \bar{N}_{\mat{\iay}(i,\check{i})}^{(m)}(u) 
&=
\sum_{n=1}^N \mu_{\mat{\iay}(i,\check{i})}\big(u; \mat{\theta}^{(m)}\big)\frac{\vect{e}_{\check{i}}^{\prime}\mat{c}_i^{(m)}\!\!\left(u; \mathcal{S}^{(n)}\right)\!\vect{e}_{\tilde{i}}
}{
\vect{\alpha}^{(m)}\!\big(\mathcal{S}^{(n)}\big)\vect{1}_{n}
}
\md u, 
\end{align*}
and for $\mat{\jay}\in E$, $j\neq i$,
\begin{align*}
&\md \bar{N}_{\mat{\iay}\mat{\jay}}^{( m)}\!\left(u\right)=\sum_{n=1}^N \sum_{\ell = 1}^{M^{(n)}}
\mu_{\mat{\iay}\mat{\jay}}\big(u; \mat{\theta}^{(m)}\big)
\frac{  
\vect{e}_{\tilde{j}}^{\prime}\mat{a}_{ij}^{(m, \ell)}\!\big(u; \mathcal{S}^{(n)}\big)\vect{e}_{\tilde{i}}
}{
\vect{\alpha}^{(m)}\!\big(\mathcal{S}^{(n)}\big)\vect{1}_{n}
}
\md \varepsilon_{T^{(n)}_\ell}(u),
\end{align*}
where $\varepsilon_{T^{(n)}_\ell}$ is the Dirac measure in $T^{(n)}_\ell$. 
\end{theorem}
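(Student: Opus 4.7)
The plan is to apply the tower property to the statewise decomposition of the complete-data log-likelihood \eqref{eq:log-likelihood_general}, push the conditional expectation past the integrals by Fubini--Tonelli, and then compute the three conditional statistics $\bar{B}_{(1,r)}^{(m)}(0)$, $\bar{I}_{\mat{\iay}}^{(m)}$ and $\bar{N}_{\mat{\iay}\mat{\jay}}^{(m)}$ individually by a forward--backward Bayes argument tailored to the marked point process observation. Independence of the $N$ i.i.d.\ trajectories factors every statistic into a sum over $n$, so it suffices to analyse a generic observation $\mathcal{S}^{(n)}$ whose marginal density is $\vect{\alpha}^{(m)}(\mathcal{S}^{(n)})\vect{1}_n$ by \eqref{eq:density_sn_m}.

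First I would handle the initial-state term. By Bayes' rule, the conditional probability that $X_2^{(n)}(0)=r$ given $\mathcal{S}^{(n)}$ equals the joint density of starting in $(1,r)$ and observing $\mathcal{S}^{(n)}$ divided by the marginal. The numerator is obtained from \eqref{eq:density_sn_m} by replacing $\vect{\pi}_1^{(m)}(0)$ with the restricted row $\pi_{(1,r)}^{(m)}(0)\vect{e}_r^{\prime}$ and closing with the backward column $\vect{\alpha}_1^{(m)}(\mathcal{S}^{(n)})$ that propagates the remainder of the trajectory from $T_1^{(n)}$ onwards; summing over $n$ yields the claimed formula for $\bar{B}_{(1,r)}^{(m)}(0)$.

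Next, for $\bar{I}_{\mat{\iay}}^{(m)}(u)$, I would locate $u$ within the unique macro sojourn $[T_{\ell-1}^{(n)},T_\ell^{(n)})$ on which $X_1^{(n)}=i$ (if any, else the contribution vanishes) and expand the joint density of $\mathcal{S}^{(n)}$ together with $X_2^{(n)}(u)=\tilde{i}$ as a product along the path: a pre-$T_{\ell-1}$ forward row $\vect{\alpha}^{(m)}(\sai_{\ell-1})$, the within-$i$ propagators $\bar{\mat{P}}_i^{(m)}(T_{\ell-1}^{(n)},u)$ into $\tilde{i}$ and $\bar{\mat{P}}_i^{(m)}(u,T_\ell^{(n)})$ out of $\tilde{i}$, and a post-$T_\ell$ backward column $\vect{\alpha}_\ell^{(m)}(\mathcal{S}^{(n)})$. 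Packaging forward and backward as the outer product $\vect{\alpha}_\ell^{(m)}(\sai_n)\vect{\alpha}^{(m)}(\sai_{\ell-1})$ and sandwiching by the two propagators reproduces exactly $\mat{c}_i^{(m)}(u;\sai_n)$, and its $(\tilde{i},\tilde{i})$ entry divided by the marginal gives the stated expression. The same decomposition applied to an unobserved within-macrostate jump from $\tilde{i}$ to $\check{i}$ at $u$ simply replaces the outgoing propagator at row $\tilde{i}$ by row $\check{i}$, so the conditional compensator is absolutely continuous with density $\mu_{\mat{\iay}(i,\check{i})}(u;\mat{\theta}^{(m)})$ times the $(\check{i},\tilde{i})$ entry of $\mat{c}_i^{(m)}$ divided by the marginal.

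Between-macrostate transitions ($j\neq i$) only occur at the observed macro-jump times $T_\ell^{(n)}$, so the conditional compensator of $N_{\mat{\iay}\mat{\jay}}$ collapses onto Dirac masses at those times. The conditional joint law of the pre- and post-jump microstates $(\tilde{i},\tilde{j})$ is assembled from $\vect{\alpha}^{(m)}(\sai_{\ell-1})$, the within-$i$ propagator into $\tilde{i}$, the intensity $\mu_{\mat{\iay}\mat{\jay}}(u;\mat{\theta}^{(m)})$, the within-$j$ propagator out of $\tilde{j}$, and the backward vector $\vect{\alpha}_{\ell+1}^{(m)}(\mathcal{S}^{(n)})$, which is exactly the $(\tilde{j},\tilde{i})$ entry of $\mat{a}_{ij}^{(m,\ell)}(u;\sai_n)$. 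Substituting all three conditional statistics into \eqref{eq:log-likelihood_general} and interchanging conditional expectation with the integrals finishes the argument. The main obstacle, and the only delicate point, is the bookkeeping: one must carefully track which microstate indices correspond to rows versus columns of $\mat{c}_i^{(m)}$ and $\mat{a}_{ij}^{(m,\ell)}$ (occupation versus directed jump, pre-jump versus post-jump), and recognise the clean split between the absolutely continuous compensator on within-macrostate transitions and the Dirac-supported contributions on the observed macro-jump grid; once these are arranged, the argument reduces to a standard forward--backward decomposition for a hidden Markov process under marked-point-process observation.
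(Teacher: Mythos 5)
Your proposal is correct and follows essentially the same route as the paper: Fubini to reduce to the three conditional statistics, then a forward--backward Bayes decomposition of the joint density of the hidden micro path and the observed marked point process, with absolutely continuous contributions for within-macrostate quantities and Dirac masses at the observed jump times for between-macrostate transitions. The only cosmetic difference is that the paper packages the forward--backward argument as a standalone lemma (Lemma \ref{lem:help}), showing that the conditional micro process within each macro sojourn is again an inhomogeneous Markov jump process with explicit initial distribution, transition probabilities and intensities, and then reads off the statistics from those characteristics; your direct density computation is the same calculation carried out inline.
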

\begin{proof}
See Appendix \ref{ap:proofs}.
\end{proof}
\subsection{EM algorithm within the reset property}\label{subsec:EM_reset} 
We now assume that the reset property on the form \eqref{eq:reset_parameter_general} is satisfied, such that the complete data log--likelihood takes the form \eqref{eq:log-likelihood_reset}.\ Since we in this case parametrize transition rates in $(\mat{\theta}, \mat{\eta})\in \mat{\Theta}\times \mat{H}$, the conditional expected log--likelihood given the macro data $\vect{\mathcal{S}}$, under some current parameter $(\mat{\theta}^{(m)}, \vect{\eta}^{(m)})$ is defined as the map
\begin{align}\label{eq:cond_log-likelihood_reset_general_def}
(\mat{\theta}, \vect{\eta}) \mapsto \bar{L}^{(m)}(\mat{\theta}, \vect{\eta}) = \mathbb{E}_{(\mat{\theta}^{(m)}, \vect{\eta}^{(m)})}\!\!\left[\left.  L^{\mat{X}}(\mat{\theta}, \vect{\eta}) \, \right|  \vect{\mathcal{S}} \right]\!, \quad m\in \mathbb{N}_0,       
\end{align}
where $\mathbb{E}_{(\mat{\theta}, \vect{\eta})}$ denotes the expectation under which the Markov jump process $\mat{X}$ of macro--micro states has transition intensity matrix function $\mat{M}(\cdot ; \mat{\theta}, \vect{\eta})$ and initial distribution $(\vect{\pi}_1(0; \vect{\eta}), \vect{0})$.  

The nature of the reset property allows us to consider each observed macro sojourn independently, and we shall, therefore, group data into the different macrostates so that computations can be carried out locally within macrostates without the need to take care of past and future macro paths.  This is made precise as follows.  For $i\in \mathcal{J}$,  let $$M_i = \sum_{n=1}^N\sum_{\ell = 1}^{M^{(n)}} \mathds{1}_{(Y_{\ell-1}^{(n)} = i)}$$ denote the number of sojourns  in macrostate $i$ observed, and let $\vect{\mathcal{T}}_i = \big(\mathcal{T}_i^{(1)},\ldots,\mathcal{T}_i^{(M_i)}\big)$ be the set of macrostate $i$ observations given by
\begin{align}
\begin{split}\label{eq:Ti_def}
\vect{\mathcal{T}}_i 
 &= \left\{\left. \left(T^{(n)}_{\ell-1},\, Y^{(n)}_{\ell-1},\, T_\ell^{(n)},\, Y_\ell^{(n)}\right)\, \right|\, n=1,\ldots,N, \, \ell = 1,\ldots,M^{(n)}\ \text{s.t} \ Y_{\ell-1}^{(n)} = i\right\}\! \\[0.4 cm]
&= \left\{\left(R_i^{(n)},\, i,\, \tau_i^{(n)}, Z_i^{(n)}\right)\right\}_{n\in \{1,\ldots,M_i\}}.
\end{split}
\end{align}
Then $\vect{\mathcal{T}}_i$ contains data points for macrostate $i$, consisting of time of entries $R_i^{(n)}$ into the macrostate, jump times $\tau_i^{(n)}$ out of the state, and macrostates $Z_i^{(n)}$ jumped to at time $\tau_i^{(n)}$.\ Similar type of data representations are made in \cite{breuer2002algorithm}. 

For a generic realisation $\mathcal{t}_i = (r_i,i,\tau_i,z_i)$ of $\mathcal{T}_i^{(n)}$, the matrix function $\vect{c}^{(m)}_i$ and defective distribution $\vect{\alpha}^{(m)}$ now satisfy, for $u\in (r_i, \tau_i]$,
\begin{align}\label{eq:ci_reset_general}
\begin{split}
\mat{c}_i^{(m)}(u; \mathcal{t}_i) &=  \bar{\mat{P}}^{(m)}_{i}(u, \tau_i)
\vect{\beta}^{(m)}_{iz_i}(\tau_i)    
\vect{\pi}_i^{(m)}(r_i)
\bar{\mat{P}}^{(m)}_{i}(r_i, u),\\[0.2 cm]
\vect{\alpha}^{(m)}(\mathcal{t}_i)\vect{1}_{d_{z_i}} &= \vect{\pi}_i^{(m)}(r_i)
\bar{\mat{P}}^{(m)}_{i}(r_i, \tau_i)\vect{\beta}^{(m)}_{iz_i}(\tau_i).
\end{split}
\end{align} 
Concerning jumps between macrostates, introduce the aggregated conditional expected statistics
\begin{align*}
\bar{N}_{\mat{\iay}j}(s) = \mathbb{E}_{(\mat{\theta}^{(m)}, \vect{\eta}^{(m)})}\!\!\left[\left.  N_{\mat{\iay}j}(s) \, \right|  \vect{\mathcal{S}} \right] 
\qquad   \text{and} \qquad 
\bar{N}_{\mat{\iay}}(s) &= \mathbb{E}_{(\mat{\theta}^{(m)}, \vect{\eta}^{(m)})}\!\left[\left.  N_{\mat{\iay}}(s) \, \right|  \vect{\mathcal{S}} \right]\!, 
\end{align*}
where the aggregated statistics $N_{\mat{\iay}j}$ and $N_{\mat{\iay}}$ are given in \eqref{eq:agg_N_reset_general}.\ Furthermore, the conditional expected statistics within macrostates, $\bar{I}^{(m)}_{\mat{\iay}}$ and $\bar{N}^{(m)}_{\mat{\iay}(i,\check{i})}$, are given as in the general case in \eqref{eq:cond_statistics_general_I}--\eqref{eq:cond_statistics_general_N}, but where the expectation is taken under $(\mat{\theta}^{(m)}, \vect{\eta}^{(m)})$, i.e.\ with the operator $\mathbb{E}_{(\mat{\theta}^{(m)}, \vect{\eta}^{(m)})}$.\  We now have the following result. 
\begin{theorem}\label{cor:cond_log-likelihood_reset_general}
Suppose that the reset property \eqref{eq:indep_cond} holds. Then the conditional expected log-likelihood \eqref{eq:cond_log-likelihood_reset_general_def} is given by
\begin{align}\label{eq:cond_log-likelihood_reset_general}
\bar{L}^{(m)}(\mat{\theta}, \vect{\eta}) 
&= 
\sum_{\mat{\iay}\in E}\Bigg(
\bar{L}^{(m)}_{\mat{\iay}}(\vect{\eta}) 
+ \sum_{\check{i} = 1\atop \check{i}\neq \tilde{i}}^{d_i}\bar{L}^{(m)}_{\mat{\iay}(i,\check{i})}(\mat{\theta}) 
+ \sum_{j\in \mathcal{J}\atop j\neq i}\bar{L}^{(m)}_{\mat{\iay}j}(\mat{\theta})   
\Bigg), 
\end{align}
where $\bar{L}^{(m)}_{\mat{\iay}(i,\check{i})}$ is as in \eqref{eq:cond_log-lik_general}, while 
\begin{align}
\begin{split}\label{eq:cond_log-likelihood_reset_general_init_jump}
\bar{L}^{(m)}_{\mat{\iay}}(\vect{\eta})
&=
 \int_{[0,T]} \log\left(\pi_{\mat{\iay}}(u ; \vect{\eta})\right)\!\!\md \bar{N}^{(m)}_{\mat{\iay}}(u), 
\\[0.2 cm]
\bar{L}^{(m)}_{\mat{\iay}j}(\mat{\theta})
&= 
 \int_{(0,T]} \log\left(\beta_{\mat{\iay}j}(u ; \mat{\theta})\right)\!\!\md \bar{N}^{(m)}_{\mat{\iay}j}(u)-\int_0^T \bar{I}^{(m)}_{\mat{\iay}}(u) \beta_{\mat{\iay}j}(u ; \mat{\theta})\md u,
\end{split}
\end{align}
but where the conditional expected statistics are given by, for $\mat{\iay}\in E$ and $\check{i}\in \{1,\ldots,d_i\}$, $\check{i} \neq \tilde{i}$,
\begin{align}
\begin{split}\label{eq:cond_statistics_reset_general_1}
\md \bar{N}_{\mat{\iay}}^{(m)}(u) &= \sum_{n=1}^{M_i} 
\frac{  
\pi_{\mat{\iay}}\big(u; \vect{\eta}^{(m)}\big)\vect{e}^{\prime}_{\tilde{i}}
\bar{\mat{P}}^{(m)}_i\big(u, \tau_i^{(n)}\big)
\vect{\beta}_{iZ_i^{(n)}}\!\big(\tau_i^{(n)}; \mat{\theta}^{(m)}\big)
}{
\vect{\alpha}^{(m)}\!\big(\mathcal{T}_i^{(n)}\big)\vect{1}_{n}
}
\md \varepsilon_{R_i^{(n)}}(u), \\[0.4 cm]
\bar{I}^{(m)}_{\mat{\iay}}(u) 
&= 
\sum_{n=1}^{M_i} \frac{  
\vect{e}_{\tilde{i}}^{\prime} \mat{c}_i^{(m)}\big(u; \mathcal{T}_i^{(n)}\big)\vect{e}_{\tilde{i}}
}{
\vect{\alpha}^{(m)}\!\big(\mathcal{T}_i^{(n)}\big)\vect{1}_{n}
},\\[0.2 cm]
\md \bar{N}^{(m)}_{\mat{\iay}(i,\check{i})}(u) 
&=  
\sum_{n=1}^{M_i}
\mu_{\mat{\iay}(i,\check{i})}\big(u; \mat{\theta}^{(m)}\big)
\frac{  
\vect{e}_{\check{i}}^{\prime} \mat{c}_i^{(m)}(u; \mathcal{T}^{(n)}_i )\vect{e}_{\tilde{i}}
}{
\vect{\alpha}^{(m)}\!\big(\mathcal{T}_i^{(n)}\big)\vect{1}_{n}
}\md u,
\end{split}
\end{align}
while for $j\in \mathcal{J}$, $j\neq i$,
\begin{align}\label{eq:cond_statistics_reset_general_2}
\md \bar{N}_{\mat{\iay}j}^{(m)}(u) = \sum_{n=1}^{M_i} \mathds{1}_{(Z_i^{(n)} = j)}
\frac{  
\vect{\pi}_{i}\big(R_{i}^{(n)}; \vect{\eta}^{(m)}\big)
\bar{\mat{P}}^{(m)}_i\!\big(R_i^{(n)}, u\big)
\vect{e}_{\tilde{i}}
\beta_{\mat{\iay}j}(u; \mat{\theta}^{(m)})
}{
\vect{\alpha}^{(m)}\!\big(\mathcal{T}_i^{(n)}\big)\vect{1}_{n}
}
\md \varepsilon_{\tau_i^{(n)}}(u). 
\end{align}
\end{theorem}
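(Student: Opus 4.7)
The plan is to start from the complete--data log--likelihood decomposition \eqref{eq:log-likelihood_reset}, apply the conditional expectation $\mathbb{E}_{(\mat{\theta}^{(m)}, \vect{\eta}^{(m)})}[\,\cdot \mid \vect{\mathcal{S}}\,]$ term by term, and then exploit the rank--one structure \eqref{eq:indep_cond} together with Theorem \ref{thm:cond_log-lik_general} to obtain the sojourn--wise expressions. Writing $L^{\mat{X}}(\mat{\theta},\vect{\eta})$ as the finite sum over $\mat{\iay}\in E$ in \eqref{eq:log-likelihood_reset}, linearity of conditional expectation immediately yields the decomposition \eqref{eq:cond_log-likelihood_reset_general} once I identify each conditional expected summand. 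For the within--macrostate contribution $\bar{L}^{(m)}_{\mat{\iay}(i,\check{i})}(\mat{\theta})$, Theorem \ref{thm:cond_log-lik_general} applies directly, giving the expressions for $\bar{I}^{(m)}_{\mat{\iay}}$ and $\md\bar{N}^{(m)}_{\mat{\iay}(i,\check{i})}$ in terms of $\mat{c}^{(m)}_i$; what needs to be verified is that, under the reset property, $\mat{c}^{(m)}_i(u;\cdot)$ collapses to the local form \eqref{eq:ci_reset_general} when restricted to the sojourn of entry, with no cross--sojourn contribution.

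The key algebraic step is to establish the factorisation
\begin{align*}
\vect{\alpha}^{(m)}(\mathcal{S}^{(n)})\vect{1}_{n} = \prod_{\ell = 1}^{M^{(n)}} \vect{\pi}^{(m)}_{Y^{(n)}_{\ell-1}}\!\bigl(T^{(n)}_{\ell-1}\bigr)\,\bar{\mat{P}}^{(m)}_{Y^{(n)}_{\ell-1}}\!\bigl(T^{(n)}_{\ell-1}, T^{(n)}_\ell\bigr)\,\vect{\beta}^{(m)}_{Y^{(n)}_{\ell-1}Y^{(n)}_\ell}\!\bigl(T^{(n)}_\ell\bigr),
\end{align*}
which is a direct consequence of inserting \eqref{eq:indep_cond} into \eqref{eq:density_sn_m} and telescoping the rank--one jumps $\vect{\beta}_{ij}(\cdot)\vect{\pi}_j(\cdot)$ against the all--ones vector. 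This factorisation has two consequences I would exploit. First, each factor of the numerators arising in Theorem \ref{thm:cond_log-lik_general} that refers to segments of the sample path strictly before the entry into macrostate $i$ (or strictly after the exit out of it) cancels identically with the corresponding factor of the denominator, leaving only the local factors depending on the current sojourn. This justifies the reduction of $\mat{c}^{(m)}_i$ and $\vect{\alpha}^{(m)}$ to \eqref{eq:ci_reset_general}. Second, the double sum $\sum_{n=1}^N\sum_{\ell=1}^{M^{(n)}}\mathds{1}_{(Y^{(n)}_{\ell-1}=i)}$ from Theorem \ref{thm:cond_log-lik_general} is precisely a sum over the enumeration $n=1,\ldots,M_i$ of the macrostate $i$ data in \eqref{eq:Ti_def}, which produces the claimed formulas for $\bar{I}^{(m)}_{\mat{\iay}}$ and $\md\bar{N}^{(m)}_{\mat{\iay}(i,\check{i})}$.

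For the two new contributions I must handle them directly, as they are not individually covered by Theorem \ref{thm:cond_log-lik_general}. For $\bar{L}^{(m)}_{\mat{\iay}}(\vect{\eta})$, the complete--data statistic $N_{\mat{\iay}}(u)$ counts entries into microstate $\mat{\iay}$, and under the reset property the conditional distribution of the entry microstate at each observed entry time $R^{(n)}_i$ into macrostate $i$ is, given the subsequent sojourn data $(\tau^{(n)}_i, Z^{(n)}_i)$ and the previous history, proportional to $\pi_{\mat{\iay}}(R^{(n)}_i;\vect{\eta}^{(m)})\vect{e}_{\tilde{i}}'\bar{\mat{P}}^{(m)}_i(R^{(n)}_i, \tau^{(n)}_i)\vect{\beta}_{iZ^{(n)}_i}(\tau^{(n)}_i;\mat{\theta}^{(m)})$; normalising by $\vect{\alpha}^{(m)}(\mathcal{T}^{(n)}_i)\vect{1}_n$ as in the second line of \eqref{eq:ci_reset_general} yields \eqref{eq:cond_statistics_reset_general_1}. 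Completely analogously, for $\bar{L}^{(m)}_{\mat{\iay}j}(\mat{\theta})$ the statistic $N_{\mat{\iay}j}(u)$ records the microstate from which the jump to macrostate $j$ is made at the exit time $\tau^{(n)}_i$, and conditioning on the sojourn data assigns mass proportional to $\vect{\pi}_i(R^{(n)}_i;\vect{\eta}^{(m)})\bar{\mat{P}}^{(m)}_i(R^{(n)}_i, u)\vect{e}_{\tilde{i}}\beta_{\mat{\iay}j}(u;\mat{\theta}^{(m)})$ to the microstate indexed by $\tilde{i}$, producing \eqref{eq:cond_statistics_reset_general_2}.

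The main obstacle will be the bookkeeping in step two: carefully justifying that the rank--one structure causes all past and future contributions in the numerator of the general expressions of Theorem \ref{thm:cond_log-lik_general} to cancel against the denominator, so that every conditional expected statistic localises to a single sojourn and the per--process summation $(n,\ell)$ can be re--indexed as the per--sojourn summation $n=1,\ldots,M_i$ of \eqref{eq:Ti_def}. Once this cancellation is spelled out on one sojourn, the remaining derivations are straightforward substitutions into the formulas of Theorem \ref{thm:cond_log-lik_general} and direct computations of the conditional microstate distributions at entry and exit instants.
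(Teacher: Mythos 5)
Your proposal is correct and follows essentially the same route as the paper's proof: decompose by linearity/Fubini, observe that the reset property makes each factor $\vect{\pi}_{y_{\ell-1}}(t_{\ell-1})\bar{\mat{P}}_{y_{\ell-1}}(t_{\ell-1},t_\ell)\vect{\beta}_{y_{\ell-1}y_\ell}(t_\ell)$ a scalar so that all non-local factors cancel between numerator and denominator, re-index the $(n,\ell)$ sum as a sum over the macrostate-$i$ sojourns in $\vect{\mathcal{T}}_i$, and read off the entry and exit statistics from the resulting local conditional distributions (the paper obtains these last two by aggregating $\bar{N}^{(m)}_{\mat{\iay}\mat{\jay}}$ from Theorem \ref{thm:cond_log-lik_general} over $\tilde{j}$, respectively over $\mat{\jay}$, which amounts to the same computation). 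Your explicit telescoped factorisation of $\vect{\alpha}^{(m)}(\mathcal{S}^{(n)})\vect{1}_{n}$ is just a global packaging of the paper's cancellation observation.
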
  
\begin{proof}
See Appendix \ref{ap:proofs}.
\end{proof}
\begin{remark}\label{rem:reset_general_IPH}\rm
The conditional expected log-likelihood \eqref{eq:cond_log-likelihood_reset_general} can be seen to have close relations to the conditional expected log-likelihood of \cite[Theorem 3.4]{IPH_Piecewise}.\ Indeed, consider, e.g.\
\begin{align*}
\int_0^T \bar{I}^{(m)}_{\mat{\iay}}(u)\mu_{\mat{\iay}\mat{\jay}}(u; \mat{\theta})\!\md u 
= 
\sum_{n=1}^{M_i}\int_{R_i^{(n)}}^{\tau_i^{(n)}}   
\frac{  
\vect{e}_{\tilde{i}}^{\prime} \mat{c}_i^{(m)}\big(u; \mathcal{T}_i^{(n)}\big)\vect{e}_{\tilde{i}}
}{
\vect{\alpha}^{(m)}\!\big(\mathcal{T}_i^{(n)}\big)\vect{1}_{n}
}\mu_{\mat{\iay}\mat{\jay}}(u; \mat{\theta})\md u.
\end{align*}  
Looking at a single term on the right--hand side and applying the substitution $v = u-R_i^{(n)}$ to the integral, we see that it equals
\begin{align*}
\int_{0}^{\tau_i^{(n)}-R_i^{(n)}}   
\frac{  
\vect{e}_{\tilde{i}}^{\prime}\mat{c}_i^{(m)}\!\!\left(v+R_i^{(n)}; \mathcal{T}_i^{(n)}\right)\!   \vect{e}_{\tilde{i}}
}{
\vect{\alpha}^{(m)}\!\big(\mathcal{T}_i^{(n)}\big)\vect{1}_{n}
}\mu_{\mat{\iay}\mat{\jay}}(v+R_i^{(n)}; \mat{\theta})\md v 
\end{align*}
where the shifted versions of $\mat{c}_i^{(m)}$ equals, with $\vect{\beta}^{(m)}_{iz_i}(\cdot) = \vect{\beta}_{iz_i}(\cdot; \mat{\theta}^{(m)})$, and also for $\vect{\pi}_i$,  
\begin{align*}
\Prodi_v^{\tau_i^{(n)}-R_i^{(n)}}\!\!\!\!\!\!\left(\bm{I}+\mat{M}^{(m)}_{ii}\!\big(x+R_i^{(n)}\big)\!\md x\right)\!
\vect{\beta}^{(m)}_{iz_i}\!\big(\tau_i^{(n)}\big)    
\vect{\pi}_i^{(m)}\!\big(R_i^{(n)}\big)\!
\Prodi_0^{v} \!\left(\bm{I}+\mat{M}^{(m)}_{ii}\!\big(x+R_i^{(n)}\big)\!\md x\right)\!.
\end{align*}
Performing similar type of manipulations for the other terms in \eqref{eq:cond_statistics_reset_general_1}--\eqref{eq:cond_statistics_reset_general_2}, we see that each term in the conditional expected log--likelihood  \eqref{eq:cond_log-likelihood_reset_general}, corresponding to each macro sojourn $n\in \{1,\ldots,M_i\}$, $i\in \mathcal{J}$, equals the conditional expected log--likelihood of \cite[Theorem 3.4]{IPH_Piecewise}, if in the latter we have     
\begin{itemize}
\item A single IPH observation $\tau_i^{(n)}-R_i^{(n)}$\medskip
\item Initial distribution $\vect{\pi}_i\big(R_i^{(n)}; \vect{\eta}^{(m)}\big)$\medskip
\item Sub--intensity matrix function $x\mapsto \mat{M}_{ii}\big(x + R^{(n)}_{i}; \mat{\theta}^{(m)}\big)$\medskip
\item Exit rate vector function $x\mapsto \vect{\beta}_{iZ_i^{(n)}}\!\big(x+R_i^{(n)}; \mat{\theta}^{(m)}\big)$ 
\end{itemize}
on the state-space $\{1,\ldots,d_i\}$ of transient states, and with parameter space $\mat{\Theta}$. \demoo
\end{remark}
It follows from Theorem \ref{cor:cond_log-likelihood_reset_general} and Remark \ref{rem:reset_general_IPH} that the E--step of the EM algorithm for the aggregate Markov model with the reset property can be formulated in terms of, and executed by, the E--step of \cite[Algorithm A]{IPH_Piecewise}.\ The computational demand of performing the estimation procedure, in this case, is therefore comparable to those for general IPHs. As explained in \cite[Subsection 3.3]{IPH_Piecewise}, these computational demands are generally much higher than those of, e.g., \cite{Albrecher-Bladt-Yslas-2020, AsmussenEM}, and assuming piecewise constant transition rates may therefore be of significant advantage. We shall follow this approach for the remainder of the paper to obtain our main algorithm, using the setup made in Subsection \ref{subsec:comple_data_piecewise_reset}. 

\subsection{EM algorithm with piecewise constant transition rates within the reset property}\label{subsec:EM_reset_piecewise}
We now consider the simplifications arising from assuming that the transition intensity matrix function $\mat{M}(\cdot; \mat{\theta}, \vect{\eta})$ is piecewise constant on the form \eqref{eq:rates_piecewise} along with the reset property \eqref{eq:reset_piecewise} being satisfied. Since the resulting complete data log-likelihood, \eqref{eq:log-likelihood_reset} with  \eqref{eq:log-likelihood_piecewise_statewise}  and \eqref{eq:log-likelihood_reset_piecewise_statewise}--\eqref{eq:log-likelihood_reset_piecewise_statewise2}, is linear in the sufficient statistics, we see that for the E--step, it now suffices to compute the conditional expected sufficient statistics,
\begin{align}
\begin{split}\label{eq:cond_stikproever_general}
\bar{B}^{(m)}_{\mat{\iay}}(k) 
&= 
\mathbb{E}_{(\mat{\theta}^{(m)},\, \vect{\eta}^{(m)})}\!\left[\left. B_{\mat{\iay}}(k) \, \right|  \vect{\mathcal{S}} \right]\!, \\[0.3 cm]
\bar{E}^{(m)}_{\mat{\iay}}(k) &= 
\mathbb{E}_{(\mat{\theta}^{(m)},\, \vect{\eta}^{(m)})}\!\left[\left. E_{\mat{\iay}}(k) \, \right|  \vect{\mathcal{S}} \right]\!, \\[0.3 cm]
\bar{O}^{(m)}_{\mat{\iay}(i,\check{i}) }(k) &= 
\mathbb{E}_{(\mat{\theta}^{(m)},\, \vect{\eta}^{(m)})}\!\left[\left. O_{\mat{\iay}(i,\check{i})}(k) \, \right|  \vect{\mathcal{S}} \right]\!,\\[0.3 cm]
\bar{O}^{(m)}_{\mat{\iay}j}(k) &= 
\mathbb{E}_{(\mat{\theta}^{(m)},\, \vect{\eta}^{(m)})}\!\left[\left. O_{\mat{\iay}j}(k) \, \right|  \vect{\mathcal{S}} \right]\!, 
\end{split}
\end{align} 
and then the M--step regarding the update of $\mat{\theta}$ simplifies to a Poisson regression, while the update of $\vect{\eta}$ simplifies to a Multinomial regression, as described in Subsection \ref{subsec:comple_data_piecewise_reset}, but where the sufficient statistics are replaced by their conditional expectations computed in the E--step.  

The transition (sub-)probability matrices within macrostates \eqref{eq:barP_theta} and corresponding defective distribution \eqref{eq:density_sn_m} under the parameter $(\mat{\theta}^{(m)}, \vect{\eta}^{(m)})$ are now on the form (cf.\ \eqref{eq:barP_piecewise}--\eqref{eq:density_sn_piecewise}):
\begin{align}\nonumber
\bar{\mat{P}}^{(m)}_i(s,t) &= 
{\rm e}^{\mat{M}_{ii}^{k(s)}(\mat{\theta}^{(m)})
\left(s_{k(s)}-s\right) 
}
\!\left(\,\prod_{\ell = k(s)+1}^{k(t)-1}{\rm e}^{\mat{M}_{ii}^{\ell}(\mat{\theta}^{(m)})(s_{\ell}-s_{\ell-1})} \right)\!
{\rm e}^{\mat{M}^{k(t)}_{ii}(\mat{\theta}^{(m)})
\left(t-s_{k(t)}\right)  			
},
\\[0.2 cm]\label{eq:barP_dens_piecewise_theta}
\vect{\alpha}^{(m)}\big(\mathcal{T}_i^{(n)}\big)\vect{1}_{n} &= \vect{\pi}_i^{k_i^{(n-)} }\!\big(\vect{\eta}^{(m)}\big)
\bar{\mat{P}}^{(m)}_{i}\!\big(R_i^{(n)}, \tau_i^{(n)}\big)\vect{\beta}^{k_i^{(n+)}}_{iZ_i^{(n)}}\!\big(\mat{\theta}^{(m)}\big)
\end{align} 
where we recall that $k(x)$, for $x\geq 0$, equals the unique $k\in \{1,\ldots,K\}$ satisfying that $x\in (s_{k-1},s_k]$; for notational convenience, we put $k^{(n-)}_i = k\big(R_i^{(n)}\big)$ and $k^{(n+)}_i = k\big(\tau_i^{(n)}\big)$. The conditional expected sufficient statistics \eqref{eq:cond_stikproever_general} then follow immediately from the more general results of Theorem \ref{cor:cond_log-likelihood_reset_general}. 
\begin{corollary}\label{cor:cond_statistics_piecewise_reset}
Suppose that the transition intensity matrix function $\mat{M}(\cdot; \mat{\theta}, \vect{\eta})$ is piecewise constant on the form \eqref{eq:rates_piecewise}, and that the reset property \eqref{eq:reset_piecewise} is satisfied.\ Then the conditional expected sufficient statistics \eqref{eq:cond_stikproever_general} are given by, 
\begin{align*}
\bar{B}^{(m)}_{\mat{\iay}}(k)  
&= \sum_{n=1}^{M_i} \mathds{1}_{(k^{(n-)}_i\, = \, k)}
\frac{  
\pi_{\mat{\iay}}^{k^{(n-)}_i}\!\big(\vect{\eta}^{(m)}\big)\, \vect{e}^{\prime}_{\tilde{i}}
\bar{\mat{P}}^{(m)}_i\!\big(R_i^{(n)}, \tau_i^{(n)}\big)
\vect{\beta}^{k^{(n+)}_i}_{iZ_i^{(n)} }\!
\big( \mat{\theta}^{(m)}
\big)
}{
\vect{\alpha}^{(m)}\big(\mathcal{T}_i^{(n)}\big)\vect{1}_{n}
}, 
\\[0.4 cm]
\bar{E}^{(m)}_{\mat{\iay}}(k) 
&=
\sum_{n = 1}^{M_i}
\frac{
\displaystyle\int_{
\tau_{i|k-1}^{(n)}
}^{
\tau_{i|k}^{(n)}
}
\vect{e}_{\tilde{i}}^{\prime}\mat{c}_i^{(m)}\big(u; \mathcal{T}_i^{(n)}\big)\vect{e}_{\tilde{i}}
\md u 
}
{
\vect{\alpha}^{(m)}\big(\mathcal{T}_i^{(n)}\big)\vect{1}_{n}
}
\\[0.4 cm]
\bar{O}^{(m)}_{\mat{\iay}(i,\check{i})}(k) 
&= 
\sum_{n = 1}^{M_i}
\frac{
\displaystyle\int_{\tau^{(n)}_{i | k-1}}^{\tau^{(n)}_{i | k}}
\vect{e}_{\check{i}}^{\prime}\mat{c}_i^{(m)}\big(u; \mathcal{T}_i^{(n)}\big)\vect{e}_{\tilde{i}}\md u
}
{
\vect{\alpha}^{(m)}\big(\mathcal{T}_i^{(n)}\big)\vect{1}_{n}
}\mu_{\mat{\iay}(i,\check{i})}^k\!\big(\mat{\theta}^{(m)}\big)
, 
\\[0.4 cm]
\bar{O}^{(m)}_{\mat{\iay}j}(k) 
&= 
\sum_{n=1}^{M_i} \mathds{1}_{(Z_i^{(n)}\, =\, j)}\mathds{1}_{(k^{(n+)}_i\, = \, k)}
\frac{  
\vect{\pi}_{i}^{k^{(n-)}_i}\!\big(\vect{\eta}^{(m)}\big)
\bar{\mat{P}}^{(m)}_i\!\big(R_i^{(n)}, \tau_i^{(n)}\big)
\vect{e}_{\tilde{i}}
\beta_{\mat{\iay}j}^{k^{(n+)}_i}\!(\mat{\theta}^{(m)})
}{
\vect{\alpha}^{(m)}\big(\mathcal{T}_i^{(n)}\big)\vect{1}_{n}
},
\end{align*}
with $\bar{\mat{P}}^{(m)}$ and $\vect{\alpha}^{(m)}$ as in \eqref{eq:barP_dens_piecewise_theta}, and where $$\tau^{(n)}_{i | k} = \left(s_k\vee r_{i}^{(n)}\right)\! \wedge \tau_i^{(n)}.$$  
\end{corollary}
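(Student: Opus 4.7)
The plan is to derive each of the four identities by specializing the results of Theorem \ref{cor:cond_log-likelihood_reset_general} to the piecewise constant setting and then integrating over $(s_{k-1},s_k]$. Since the complete data log--likelihood is linear in the sufficient statistics $B_{\mat{\iay}}(k)$, $E_{\mat{\iay}}(k)$, $O_{\mat{\iay}(i,\check{i})}(k)$, $O_{\mat{\iay}j}(k)$, cf.\ \eqref{eq:log-likelihood_piecewise_statewise} and \eqref{eq:log-likelihood_reset_piecewise_statewise}--\eqref{eq:log-likelihood_reset_piecewise_statewise2}, and the conditional expectation is linear as well, it suffices to show that the conditional expectations integrate the corresponding differential statistics from Theorem \ref{cor:cond_log-likelihood_reset_general} against $\md u$ on $(s_{k-1},s_k]$. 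First, I would record the observation that, under the piecewise constant regime \eqref{eq:rates_piecewise} and \eqref{eq:reset_piecewise}, the quantities $\vect{\pi}_i(R_i^{(n)}; \vect{\eta}^{(m)})$, $\vect{\beta}_{iZ_i^{(n)}}(\tau_i^{(n)}; \mat{\theta}^{(m)})$ and the intensity $\mu_{\mat{\iay}(i,\check{i})}(u; \mat{\theta}^{(m)})$ reduce on each subinterval to the constants $\vect{\pi}_i^{k_i^{(n-)}}(\vect{\eta}^{(m)})$, $\vect{\beta}_{iZ_i^{(n)}}^{k_i^{(n+)}}(\mat{\theta}^{(m)})$, and $\mu_{\mat{\iay}(i,\check{i})}^{k}(\mat{\theta}^{(m)})$, and in particular $\bar{\mat{P}}^{(m)}_i(s,t)$ has the factored form displayed in \eqref{eq:barP_dens_piecewise_theta}.

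For $\bar{B}^{(m)}_{\mat{\iay}}(k)$ and $\bar{O}^{(m)}_{\mat{\iay}j}(k)$, the differential conditional statistics in \eqref{eq:cond_statistics_reset_general_1} and \eqref{eq:cond_statistics_reset_general_2} are expressed as sums of Dirac point masses at the entry times $R_i^{(n)}$ and at the exit times $\tau_i^{(n)}$, respectively. Integrating over $(s_{k-1},s_k]$ therefore just selects those observations for which $R_i^{(n)}\in(s_{k-1},s_k]$, encoded by $\mathds{1}_{(k_i^{(n-)}=k)}$, or $\tau_i^{(n)}\in(s_{k-1},s_k]$, encoded by $\mathds{1}_{(k_i^{(n+)}=k)}$ (in the latter case combined with $\mathds{1}_{(Z_i^{(n)}=j)}$ as it already appears in \eqref{eq:cond_statistics_reset_general_2}). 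After substituting the piecewise constant replacements described above, the claimed formulas for $\bar{B}^{(m)}_{\mat{\iay}}(k)$ and $\bar{O}^{(m)}_{\mat{\iay}j}(k)$ follow.

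For $\bar{E}^{(m)}_{\mat{\iay}}(k)$ and $\bar{O}^{(m)}_{\mat{\iay}(i,\check{i})}(k)$, the relevant integrands from \eqref{eq:cond_statistics_reset_general_1} are supported within the sojourn interval $[R_i^{(n)},\tau_i^{(n)}]$ of observation $n$, since $\vect{c}_i^{(m)}(u;\mathcal{T}_i^{(n)})$ vanishes outside this interval by the definition in \eqref{eq:ci_reset_general}. Integrating against $\md u$ on $(s_{k-1},s_k]$ therefore reduces to integration over the intersection $(s_{k-1},s_k]\cap (R_i^{(n)},\tau_i^{(n)}]$, which equals the interval from $\tau_{i|k-1}^{(n)}$ to $\tau_{i|k}^{(n)}$ with $\tau_{i|k}^{(n)} = (s_k\vee R_i^{(n)})\wedge \tau_i^{(n)}$. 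For $\bar{O}^{(m)}_{\mat{\iay}(i,\check{i})}(k)$ the intensity $\mu_{\mat{\iay}(i,\check{i})}(u;\mat{\theta}^{(m)})$ is constantly $\mu_{\mat{\iay}(i,\check{i})}^k(\mat{\theta}^{(m)})$ throughout this subinterval and can be pulled outside the integral, yielding the stated expression.

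The derivation is essentially bookkeeping; there is no analytical obstacle beyond Theorem \ref{cor:cond_log-likelihood_reset_general} itself. The one point requiring care is the correct description of the intersection of $(s_{k-1},s_k]$ with the sojourn interval $(R_i^{(n)},\tau_i^{(n)}]$ when either $R_i^{(n)}$ or $\tau_i^{(n)}$ lies inside $(s_{k-1},s_k]$; this is what the compact notation $\tau^{(n)}_{i|k}$ is designed to handle and should be verified by checking the four cases $k<k_i^{(n-)}$, $k=k_i^{(n-)}$, $k_i^{(n-)}<k<k_i^{(n+)}$, and $k\geq k_i^{(n+)}$.
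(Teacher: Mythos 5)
Your proposal is correct and follows essentially the same route as the paper: the paper's proof simply inserts the complete-data sufficient statistics (integrals of the counting and indicator processes over $(s_{k-1},s_k]$) into the conditional expectations and invokes Theorem \ref{cor:cond_log-likelihood_reset_general}, which is exactly the bookkeeping you carry out explicitly. Your case analysis verifying that $(s_{k-1},s_k]\cap(R_i^{(n)},\tau_i^{(n)}]$ is the interval from $\tau^{(n)}_{i|k-1}$ to $\tau^{(n)}_{i|k}$ is the only nontrivial detail, and you handle it correctly.
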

\begin{proof}
By inserting the expressions for the sufficient statistics in the complete data case, \eqref{eq:OE} combined with \eqref{eq:Occurrences_reset}, into \eqref{eq:cond_stikproever_general}, the result follows immediately from \eqref{eq:cond_statistics_reset_general_1}--\eqref{eq:cond_statistics_reset_general_2} in  Theorem \ref{cor:cond_log-likelihood_reset_general}. 
\end{proof}
By employing the same techniques as in Remark \ref{rem:reset_general_IPH} on the conditional expected statistics of Corollary \ref{cor:cond_statistics_piecewise_reset}, we find that the E--step, in this case, can be written in terms of the E--step of \cite[Algorithm 1]{IPH_Piecewise}, with analogue modifications of shifting all inputs from time $0$ to the time of entries into the macrostate, $R_i^{(n)}$. So the computational demand should be comparable to the estimation of IPHs with piecewise constant transition rates. In particular, the sub--intensity matrix function is shifted, and so one must accordingly shift the grid points on which it is piecewise constant. This is a conceptually different modification from the more general case of Subsection \ref{subsec:EM_reset}.\ 

The complete EM algorithm for the aggregate Markov model with piecewise constant transition rates within the reset property is presented in Algorithm \ref{alg:reset_piecewise} below. We implement this algorithm and show a numerical example of its applicability in {\color{mogens} Section \ref{sec:num}}.  
{\color{mogens}
\begin{remark}[Performance of the EM algorithm]\label{rem:EM_performance}
The EM algorithm is one of a number of possible maximum likelihood methods for incomplete data. It has become the preferred method for (inhomogeneous) phase--type fitting due to its stability compared to competing methods such as direct optimisation or Markov Chain Monte Carlo (MCMC) methods. For IPHs, which are the building blocks of this paper, a comparison to a kernel density method is given in \cite[Section 5.1]{IPH_Piecewise} regarding fitting accuracy and computational burden, which shows promising results. Regarding MCMC methods, there are many problems regarding identifiability, and the estimation of parameters is essentially impossible; see \cite{BladtGonzalesLauritzen} for PHs. \demoo
\end{remark}

\begin{algorithm}[H]
\caption{{EM algorithm for the aggregate Markov model with the reset property and piecewise constant transition rates}}\label{alg:reset_piecewise}
\begin{algorithmic}
\State \textit{\textbf{Input}}: Initial parameters $(\mat{\theta}^{(0)}, \vect{\eta}^{(0)})\in  \mat{\Theta}\times \mat{H}$, and for each macrostate $i\in \mathcal{J}$, data points within the macrostate, 
\begin{align*}
\mat{\mathcal{T}}_i=\left\{\left(r_i^{(n)}, \tau_i^{(n)}, z_i^{(n)}\right)\right\}_{n\in \{1,\ldots, m_i\}},
\end{align*}
consisting of time of entries $r_i^{(n)}$ into the macrostate,  jump times $\tau_i^{(n)}$ out of the state, and macrostate $z_i^{(n)}$ jumped to at time $\tau_i^{(n)}$.\medskip
\begin{enumerate} 
\item[ 0)] Set $m:=0$ 
\item[ 1)]\textit{E-step:}\ For each macrostate $i\in \mathcal{J}$, 
\begin{itemize}\medskip
\item For each sojourn $n\in \{1,\ldots,m_i\}$,\medskip 
\begin{enumerate}[i)]
\item Set $k_i^{(n-)} := k\big(r_i^{(n)}\big)$ and $k_i^{(n+)} := k\big(\tau_i^{(n)}\big)$. \medskip  
\item  Run the E--step of \cite[Algorithm 1]{IPH_Piecewise} with \medskip
\begin{itemize}
\item Grid points $0 = \tilde{s}_0 < \tilde{s}_1 < \cdots < \tilde{s}_{K-k_i^{(n-)} } < \tilde{s}_{K-k_i^{(n-)}+1} = \infty$, where
\begin{align*}
\qquad\qquad\qquad\qquad\qquad\qquad\quad\tilde{s}_k = s_{k_i^{(n-)} + k - 1} - r_i^{(n)}, \qquad k\in \big\{1,\ldots,K-k_i^{(n-)}+1\big\}.
\end{align*}
\item State--space of transient states $\{1,\ldots,d_i\}$\medskip
\item A single IPH observation $\tau_i^{(n)} - r_i^{(n)}$\medskip
\item Initial distribution $\vect{\pi}_i^{k_i^{(n-)} }\!\big(\vect{\eta}^{(m)}\big)$\medskip
\item Sub--intensity matrix function $x\mapsto \mat{M}_{ii}\big(x+r_i^{(n)}; \mat{\theta}^{(m)}\big)$ \medskip
\item Exit rate vector function $x\mapsto \vect{\beta}_{iz_i^{(n)}}\!\big(x+r_i^{(n)}; \mat{\theta}^{(m)}\big)$,\medskip
\end{itemize}
which, across $k\in \big\{1,\ldots,K-k_i^{(n-)}+1\big\}$, gives the output: 
\begin{itemize}\medskip
\item Expected statistics for the initial state:\ $\bar{B}^{(n,m)}_{\mat{\iay}}\big(k_i^{(n-)} \big)$     
\medskip
\item Expected exposures:\ $\bar{E}^{(n,m)}_{\mat{\iay}}\big(k_i^{(n-)}+k-1\big)$
\medskip
\item Expected occurrences between transient states:\ $\bar{O}^{(n,m)}_{\mat{\iay}(i,\check{i})}\big(k_i^{(n-)}+k-1\big)$.   
\item Expected occurrences to the 'absorbing' state: $\bar{O}^{(n,m)}_{\mat{\iay}z_i^{(n)}}\big(k_i^{(n+)} \big)$\medskip
\end{itemize}
\item[iii)]Compute total expected sufficient statistics, for $k\in \{1,\ldots,K\}$,  
\begin{align*}
\qquad\qquad\qquad\bar{B}^{(m)}_{\mat{\iay}}(k) &= \sum_{n=1}^{M_i}\bar{B}^{(n,m)}_{\mat{\iay}}(k),  
&&\bar{O}^{(m)}_{\mat{\iay}(i,\check{i})}(k) = \sum_{n=1}^{M_i}\bar{O}^{(n,m)}_{\mat{\iay}(i,\check{i})}(k),  \\[0.2 cm]
\bar{E}^{(m)}_{\mat{\iay}}(k) &= \sum_{n=1}^{M_i}\bar{E}^{(n,m)}_{\mat{\iay}}(k), 
&&\bar{O}^{(m)}_{\mat{\iay}z_i^{(n)}}(k) = \sum_{n=1}^{M_i}\bar{O}^{(n,m)}_{\mat{\iay}z_i^{(n)}}(k).
\end{align*}
\end{enumerate}
\end{itemize}
\end{enumerate}
\algstore{test_partial}
\end{algorithmic}
\end{algorithm}
\begin{algorithm}
\begin{algorithmic}
\algrestore{test_partial}
\State \begin{enumerate}
\item[ 2)] \textit{M-step:}\ 
Update the parameters:\medskip\medskip
\begin{enumerate}[i)]
\item Compute $\hat{\vect{\eta}}^{(m+1)}$ as the MLE of the  regressions\medskip
\begin{align*}
 \left(\bar{B}^{(m)}_{(i,1 )}(k), \ldots, \bar{B}^{(m)}_{(i,d_i )}(k)\right) \sim \mathrm{Multinomial}\!\left(B_i(k), \bm{\pi}^k_i(\bm{\eta})\right)\!, 
 \end{align*}\par\vspace*{0.3cm}
\ across $k \in \{0,1,\ldots,K\}$ and $i\in \mathcal{J}$. \medskip\medskip
\item Compute $\hat{\mat{\theta}}^{(m+1)}$ as the MLE of the  regressions\medskip\medskip
\begin{align*}
 \bar{O}^{(m)}_{\mat{\iay}(i,\check{i})}(k) &\sim \mathrm{Pois}\Big(\mu^k_{\mat{\iay}(i,\check{i})}(\mat{\theta})\bar{E}^{(m)}_{\mat{\iay}}(k)\Big), \\[0.3 cm]
\bar{O}^{(m)}_{\mat{\iay}j}(k) &\sim \mathrm{Pois}\Big(\beta^k_{\mat{\iay}j}(\mat{\theta})\bar{E}^{(m)}_{\mat{\iay}}(k)\Big). 
\end{align*}\par\vspace*{0.3cm}
\ across $k\in \{1,\ldots,K\}$, $\mat{\iay}\in E$, $\check{i}\in \{1,\ldots,d_i\}$, $\check{i}\neq \tilde{i}$, and $j\in \mathcal{J}$.
\end{enumerate}
  \vspace*{0.2cm}  
\item[3)] Set $m:=m+1$ and GOTO 1), unless a stopping rule is satisfied. \vspace*{0.2cm}
\end{enumerate}
    \State \textit{\textbf{Output}: Fitted parameters $(\hat{\mat{\theta}}, \hat{\vect{\eta}})$.}
\end{algorithmic}
\end{algorithm}

\begin{remark}[On the number of microstates]\label{rem:EM_micro}
The number of microstates in each macrostate is chosen only to improve the fit, particularly to capture duration effects. There are no known methods for deciding the number of microstates other than visual inspection of the fits and comparison of likelihood values. We show numerical examples of this in Section \ref{sec:num}. More systematic methods involving information criteria (like AIC and BIC) are not possible since the minimum number of parameters needed for phase--type representations (of the microstates) is an open and unsolved question. \demoo 
\end{remark}
}


\raggedbottom
\section{Numerical example}\label{sec:num}
In this section, we present a numerical example illustrating the methods developed in Section \ref{sec:em}. The purpose of the example is to let the EM algorithm fit micro intensities based on macro data, which are simulated from a time--inhomogeneous semi--Markov model already used in the context of multi--state life insurance, see, e.g., \cite{hoem72, helwich, christiansen2012, BuchardtMollerSchmidt} for these type of models.\ Since the aggregate Markov model with the reset property admits a time--inhomogeneous semi--Markovian structure (see \cite[Subsection 4.2]{AhmadBladtFurrer}),  we can apply algorithms within this special case to fit an aggregate Markov model with the reset property that sufficiently captures the duration effects appearing in these kinds of models.

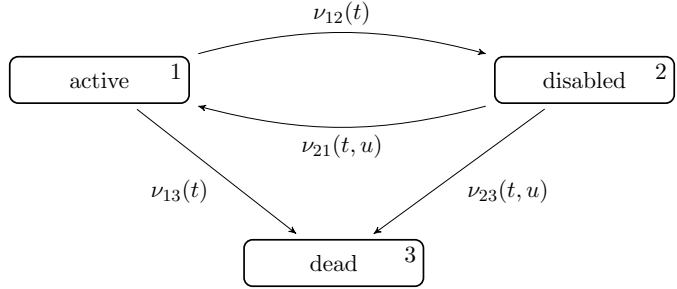
\begin{figure}[H]
	\centering
\scalebox{0.8}{	
\begin{tikzpicture}[node distance=2em and 0em]
		\node[punkt] (2) {disabled};
		\node[anchor=north east, at=(2.north east)]{$2$};
		\node[punkt, left = 50mm of 2] (1) {active};
		\node[anchor=north east, at=(1.north east)]{$1$};
		\node[draw = none, fill = none, left = 25 mm of 2] (test) {};
		\node[punkt, below = 25mm of test] (3) {dead};
		\node[anchor=north east, at=(3.north east)]{$3$};
	\path
		(1)	edge [pil, bend left = 15]	node [above]			{$\nu_{12}(t)$}		(2)
		(2)	edge [pil]				node [below right]	{$\nu_{23}(t,u)$}		(3)
		(1)	edge [pil]				node [below left]		{$\nu_{13}(t)$}		(3)
		(2)	edge [pil, bend left = 15]	node [below]			{$\nu_{21}(t,u)$}		(1)
	;
	\end{tikzpicture}
	}
	\caption{{\color{mogens}Flow diagram of the time-inhomogeneous semi-Markovian disability model with recoveries used to simulate macro data.}}
	\label{fig:disability}
\end{figure} 
We simulate $10,\!000$ paths of macro data from the three--state disability model depicted in Figure \ref{fig:disability}, where we consider a 30--year old male being active upon initiation. For the time and duration--dependent transition rates $\nu_{ij}$,  we use a set of transition rates based on rates employed by a large Danish life insurance company for males, which has been reported to and published by the Danish Financial Supervisory Authority. They are given by, for ages $t\in [30,110]$ and durations $u\leq t$,   
\begin{align}\nonumber
\nu_{13}(\cdot)&\, : \, \text{The 2012 edition of the Danish FSA's longevity benchmark}, \\[0.3 cm]\nonumber
\nu_{12}(t) &=\begin{cases}
\e^{72.53851-10.66927t+0.53371t^2-0.012798t^3 + 1.4922\cdot 10^{-4} t^4 -6.8007\cdot 10^{-7} t^5}		\qquad &\text{for}\ t\leq 67 \\ 
0.0009687435 &\text{for}\  t>67
\end{cases}
\\[0.3 cm] \label{eq:rates_pfa}
\nu_{21}(t,u) &=  
\begin{cases}
\e^{-0.9148875  -0.0309126t +  4.8715347u}\qquad &\text{for}\ u\leq 0.2291667  \\ 
\e^{0.3766531 -0.0309126t -0.7642786u} &\text{for}\  u\in (0.2291667, 2]\\ 
\e^{-0.4808001 -0.0309126t -0.335552u} &\text{for}\  u\in (2, 5]\\ 
\e^{-0.042168 -0.092455t} &\text{for}\  u>5
\end{cases}\\[0.3 cm]  \nonumber
\nu_{23}(t,u) &= 
\begin{cases}
\e^{-6.1057464+0.0635736t -0.2891195u}\qquad &\text{for}\ u\leq 5\\ 
\e^{-11.9169277+0.1356766t} &\text{for}\  u>5
\end{cases}
\end{align}
Since this model only contains duration dependence regarding transitions from the disabled {\color{mogens} state}, we fit the aggregate Markov model depicted in Figure \ref{fig:disability_micro} to the simulated data, where only the disabled state contains {\color{mogens} microstates} so that $d_1 = d_3 = 1$ and $d_2\geq 1$. This model always satisfies the reset property, allowing us to capture the duration effects regarding transitions from the disabled state. 

\begin{figure}[h]
	\centering
\scalebox{0.8}{	\begin{tikzpicture}[node distance=2em and 0em]
		\node[punkt] (21) {$(2,\!1)$};
		\node[right = 15mm of 21] (2temp) {$\cdots$};
		\node[punkt, right = 38mm of 21] (22) {$(2,d_2)$};
		\node[punkt, left = 30mm of 21] (11) {active $(1,\!1)$};
		\node[draw = none, fill = none, left = 50 mm of 22] (test) {};
		\node[punkt, below = 30mm of test] (3) {dead $(3,\!1)$};
		\node[right = 15pt] at ($(22.south)+(0,1.2)$) {disabled};
		\draw[thick, dotted] ($(21.north west)+(-0.5,0.7)$) rectangle ($(22.south east)+(0.8,-0.5)$);
		\path
		($(11.north east)$)	edge [pil, bend left = 15]	($(21.north west) + (-0.6,-0.1)$)
		($(21.south west)+(-0.6,0.1)$) edge [pil, bend left=15] ($(11.south east)$)
		($(22.south) + (-1.7,-0.6)$) edge [pil, bend left=15] ($(3.north east)$)
		($(21.north east)  + (-0.1,0.1)$)	edge [pildotted, bend left = 10] ($(22.north west) + (0.1,0.1)$)
		($(22.south west)  + (0.1,-0.1)$)	edge [pildotted, bend left = 10] ($(21.south east) + (-0.1,-0.1)$)
		($(21.east)  + (0.1,0.1)$)	edge [pildotted, bend left = 10] ($(2temp.west) + (-0.1,0.1)$)
		($(2temp.west)  + (-0.1,-0.1)$)	edge [pildotted, bend left = 10] ($(21.east) + (0.1,-0.1)$)
		($(2temp.east)  + (0.1,0.1)$)	edge [pildotted, bend left = 10] ($(22.west) + (-0.1,0.1)$)
		($(22.west)  + (-0.1,-0.1)$)	edge [pildotted, bend left = 10] ($(2temp.east) + (0.1,-0.1)$)
		($(11.south)$) edge [pil, bend right=15] ($(3.north west)$)
	;
	\end{tikzpicture}}
	\caption{Disability model with $d_2$ unobservable disability microstates.}
	\label{fig:disability_micro}
\end{figure}
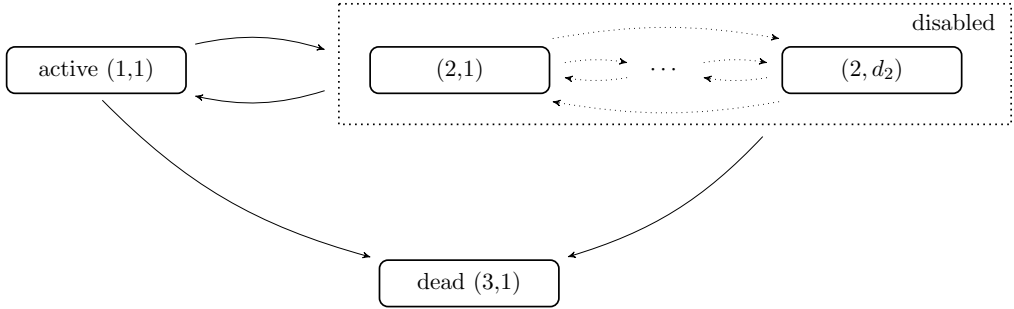
Algorithm \ref{alg:reset_piecewise} is then used to fit piecewise constant transition rates and initial distributions on the micro level in the disabled state for a varying number of disability microstates, $d_2\in \{1,2,3,5,7,10\}$.\ We use piecewise constant transition rates on the grid $\{30,36,37,\ldots,88,110\}$, {\color{mogens} which consists of yearly points with larger gaps at the beginning and end due to lack of data at these areas.}.  The rates between microstates as well as exit rates are parametrised similarly as the true rates \eqref{eq:rates_pfa}, that is,  \eqref{eq:pois-regr} with $q=1$ and $g^{(1)}(s) = s$, such that Poisson GLMs of occurrences against time (with logarithmic link function and log--exposure as offset) is carried {\color{mogens} out} in the M--step; this implies that logarithmic transition rates are (piecewise) linear in time. For the initial distribution, we use \eqref{eq:mult-regr} so that Multinomial logistic regressions of the number of initiations against time are carried out in the M--step. {\color{mogens} The total number of parameters in the model then becomes equal to $2(d_2-1)+2d_2(d_2-1)+4d_2$, each term respectively being the number of parameters in the initial distribution $\vect{\pi}_2$, sub-intensity matrix function $\mat{M}_{22}$, and exit rate vector functions $\vect{\beta}_{21},\vect{\beta}_{23}$.\ The estimated parameters obtained from the fits can be found in Table \ref{tab:pars_transpose} for the case $d_2 = 10$, while the final log-likelihood values (for all $d_2$) are shown in Table \ref{tab:log-liks}. The latter shows an overall improvement in the model fits as $d_2$ increases.  
}

\begin{table}[h]
\centering
\resizebox{\columnwidth}{!}{{\color{mogens}
\begin{tabular}{r|rrrrrrrrrr}
  $\tilde{i}$ & 1 & 2 & 3 & 4 & 5 & 6 & 7 & 8 & 9 & 10 \\ 
   \hline
$\eta_{(2,\tilde{i})}^0$ &  & -0.01 & -0.01 & -7.80 & -0.01 & 14.55 & -8.78 & -0.01 & -18.72 & -0.01 \\ 
  $\eta_{(2,\tilde{i})}^1$ &  & -0.77 & -0.77 & 0.08 & -0.77 & -0.42 & 0.10 & -0.77 & 0.18 & -0.77 \\ 
  $\theta_{(2,\tilde{i})(2,1)}^0$ &  & 255.74 & 125.53 & 722.59 & 721.38 & 824.68 & 719.31 & 647.66 & 117.61 & -19.40 \\ 
  $\theta_{(2,\tilde{i})(2,1)}^1$ &  & -7.73 & -4.41 & -20.56 & -21.02 & -23.57 & -20.34 & -18.78 & -3.10 & -0.16 \\ 
  $\theta_{(2,\tilde{i})(2,2)}^0$ & 1734.69 &  & -6.31 & 6.33 & -22.31 & -1.53 & 6.61 & -0.20 & -16.68 & 6.84 \\ 
  $\theta_{(2,\tilde{i})(2,2)}^1$ & -48.18 &  & -0.08 & -0.17 & 0.33 & -0.01 & -0.16 & 0.01 & -0.08 & -0.15 \\ 
  $\theta_{(2,\tilde{i})(2,3)}^0$ & -30.77 & 2.12 &  & 11.69 & -0.43 & 1.22 & 12.15 & 16.01 & 3.04 & 15.59 \\ 
  $\theta_{(2,\tilde{i})(2,3)}^1$ & -0.03 & -0.11 &  & -0.20 & -0.02 & -0.06 & -0.23 & -0.35 & -0.03 & -0.31 \\ 
  $\theta_{(2,\tilde{i})(2,4)}^0$ & -13.36 & -7.30 & 3.54 &  & -3.22 & -55.00 & 1.46 & 7.89 & 1.39 & 4.03 \\ 
  $\theta_{(2,\tilde{i})(2,4)}^1$ & -0.08 & -0.04 & -0.08 &  & -0.09 & 0.65 & -0.02 & -0.28 & -0.03 & -0.15 \\ 
  $\theta_{(2,\tilde{i})(2,5)}^0$ & -28.02 & 1.26 & 9.40 & 11.58 &  & 2.73 & 13.57 & 1.42 & -16.52 & 11.70 \\ 
  $\theta_{(2,\tilde{i})(2,5)}^1$ & -0.08 & -0.04 & -0.20 & -0.19 &  & -0.07 & -0.23 & -0.06 & -0.08 & -0.21 \\ 
  $\theta_{(2,\tilde{i})(2,6)}^0$ & -25.55 & 2.31 & 6.50 & 10.80 & -4.46 &  & 2.10 & 5.78 & 1500.97 & 10.61 \\ 
  $\theta_{(2,\tilde{i})(2,6)}^1$ & -0.01 & -0.07 & -0.17 & -0.17 & 0.05 &  & -0.02 & -0.12 & -41.46 & -0.25 \\ 
  $\theta_{(2,\tilde{i})(2,7)}^0$ & -33.88 & -11.55 & 10.77 & 8.82 & -11.69 & -4.37 &  & 0.78 & -11.49 & 7.58 \\ 
  $\theta_{(2,\tilde{i})(2,7)}^1$ & -0.01 & 0.14 & -0.24 & -0.17 & 0.15 & 0.07 &  & -0.07 & -0.05 & -0.18 \\ 
  $\theta_{(2,\tilde{i})(2,8)}^0$ & -31.72 & 4.16 & 7.07 & 10.11 & -2.99 & 6.29 & 13.86 &  & -17.03 & -0.80 \\ 
  $\theta_{(2,\tilde{i})(2,8)}^1$ & -0.10 & -0.09 & -0.17 & -0.20 & 0.04 & -0.12 & -0.25 &  & -0.08 & 0.01 \\ 
  $\theta_{(2,\tilde{i})(2,9)}^0$ & -0.97 & -6.49 & 105.74 & -5.73 & 112.41 & -10.93 & -4.30 & -12.63 &  & 4.64 \\ 
  $\theta_{(2,\tilde{i})(2,9)}^1$ & 0.01 & -0.11 & -2.82 & -0.18 & -3.28 & 0.08 & -0.14 & -0.06 &  & -0.13 \\ 
  $\theta_{(2,\tilde{i})(2,10)}^0$ & -29.25 & 4.40 & -0.81 & 8.26 & -6.18 & 1.08 & 9.62 & 2.08 & -13.22 &  \\ 
  $\theta_{(2,\tilde{i})(2,10)}^1$ & -0.07 & -0.12 & 0.01 & -0.20 & 0.09 & -0.06 & -0.22 & -0.12 & -0.06 &  \\ 
  $\theta_{(2,\tilde{i})1}^0$ & -0.34 & 19.56 & -40.84 & 2089.82 & -6.23 & 1799.33 & 323.73 & -23.89 & 3.89 & -13.33 \\ 
  $\theta_{(2,\tilde{i})1}^1$ & -0.05 & -0.49 & 0.02 & -58.89 & 0.04 & -51.02 & -9.78 & 0.00 & -0.06 & 0.02 \\ 
  $\theta_{(2,\tilde{i})3}^0$ & -17.03 & -25.91 & -25.49 & 1.18 & -58.45 & -53.46 & 32.13 & -21.98 & 1.38 & -26.46 \\ 
  $\theta_{(2,\tilde{i})3}^1$ & 0.03 & -0.33 & 0.00 & -0.00 & 0.01 & 0.18 & -0.82 & -0.01 & -0.03 & -0.13 \\ 
  \end{tabular}
  }
  }
\caption{{\color{mogens}Estimated parameters $(\hat{\vect{\theta}}, \hat{\vect{\eta}})$ for the case  $d_2=10$. The resulting transition rates on the micro level follows the parametrizations \eqref{eq:pois-regr} and \eqref{eq:mult-regr} with $q=1$ and $g^{(r)}(s) = s^r$, $r\in\{0,1\}$.}} 
\label{tab:pars_transpose}
\end{table}

\begin{table}[h]
\centering{\color{mogens}
\begin{tabular}{c| rrrrrr}
 $d_2$ & 1 & 2 & 3 & 5 & 7 & 10 \\ \hline
 Log-likelihood & -26,315 & -24,324 & -24,241 & -24,153 & -24,132 & -24,115
  \end{tabular}
  }
\caption{
{
{\color{mogens}
Final log-likelihood values for the different number of disability microstates, $d_2$.}
}
} 
\label{tab:log-liks}
\end{table}

We then compare our model fits with the true model, and to avoid statistical noise and allow for comparison to more classic methods, we also carry out a semi-Markovian GLM fit on the macro data, that is, a Poisson GLM fit of occurrences against age and duration with log-link function and log-exposure as offset. Here we use segmented regressions in the duration dimension to obtain the parametrisation \eqref{eq:rates_pfa}.

In Figure \ref{fig:cond_distributions_pfa_rates_s60}, we show for $s = 60.5$ the resulting fits of the conditional survival function and density of the sojourn time distribution in the disabled state given entry into the state at time $s$, respectively given by
\begin{align*}
t \mapsto \vect{\pi}_2(s; \hat{\vect{\eta}})\bar{\mat{P}}_{2}(s,t; \hat{\mat{\theta}})\vect{1}_{d_2} \qquad \text{and} \qquad  t\mapsto \vect{\pi}_2(s; \hat{\vect{\eta}})\bar{\mat{P}}_{2}(s,t; \hat{\mat{\theta}})\!\left(\vect{\beta}_{21}(t; \hat{\mat{\theta}})+\vect{\beta}_{23}(t; \hat{\mat{\theta}}) \right) 
\end{align*}
against their empirical counterparts as well as (cf., e.g., \cite{hoem72,helwich, christiansen2012}):
\begin{align*}
t\mapsto \e^{-\int_s^t \nu_{2\cdot}(v,v-s)\md v} \qquad \text{and} \qquad t\mapsto \e^{-\int_s^t \nu_{2\cdot}(v,v-s)\md v}\!\left(\nu_{21}(t,t-s)+\nu_{23}(t,t-s)\right)
\end{align*}
for the true model and the GLM fits. We see that a single microstate, corresponding to a Markov chain, does not fit very well with the data, which is to be expected as this model is not able to capture the duration effects appearing in the true model. However, we see that by adding microstates to the disabled state, we can fit the distributions with a high accuracy already with $2$-$3$ microstates. A challenging part of the density close to the origin (corresponding to small durations) as well as around discontinuity points requires many more microstates compared to the GLM fit, though, but this does not seem to carry over to the corresponding survival function, where our model fits performs very well.   

\begin{figure}[h]
   \centering
   \includegraphics[width=0.9\textwidth,trim={8mm 4mm 8mm 0mm}]{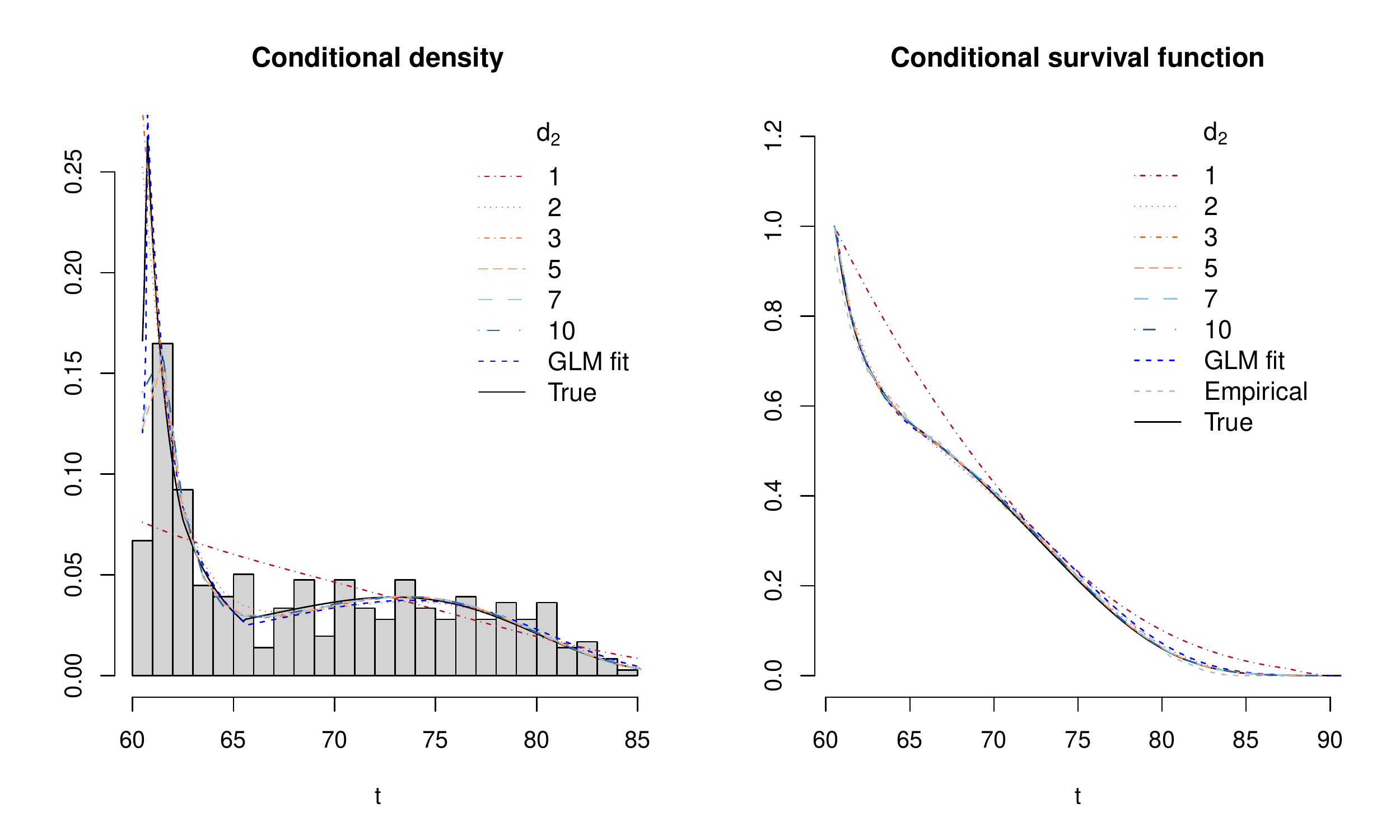}
   \caption{Conditional densities (left) and survival functions (right) of the sojourn time in the disabled state given entry at time $s = 60.5$, for different number of disability microstates, $d_2$, along with the GLM fit as well as true and empirical distributions. The case $d_2 = 1$ corresponds to a Markov chain.}
   \label{fig:cond_distributions_pfa_rates_s60}
\end{figure}

While these survival functions and densities focus on sojourn times and, therefore, not on specific transitions out of the state, we may further examine how the actual transition rates are fitted; these play an important role in multi-state life insurance. Indeed, according to \cite[Subsection 4.2]{AhmadBladtFurrer}, we have fitted an aggregate Markov model with the reset property that admits a time-inhomogeneous semi-Markovian structure with the following time and duration--dependent transition rates out of the disabled state: 
\begin{align}\label{eq:fitted_semi-Markov_rates}
(t,u) \mapsto \frac{
\vect{\pi}_2(t-u; \hat{\vect{\eta}})\bar{\mat{P}}_{2}(t-u,t; \hat{\mat{\theta}})  
}{
\vect{\pi}_2(t-u; \hat{\vect{\eta}})\bar{\mat{P}}_{2}(t-u,t; \hat{\mat{\theta}})\vect{1}_{d_2}
}\vect{\beta}_{2j}(t; \hat{\mat{\theta}}), \qquad j\in \mathcal{J},\ j\neq 2.
\end{align}

In Figure \ref{fig:intensity_pfa_rates_u1}, we examine this as a function of time, $t$, for fixed duration $u = 1$, and compare it with the true rates $t\mapsto \nu_{2j}(t,1)$, as well as its empirical counterpart, namely empirical occurrence--exposure rates on the macro level in the disabled state. Here, we also see how badly the Markov chain case of a single microstate fits. Still, more importantly, we see that it requires more microstates to fit the actual transition rates more accurately than we saw with the sojourn time distributions. However, since we see a higher deviation away from the true model for the GLM fit, especially for recoveries, this might also be due to statistical noise.

\begin{figure}[H]
   \centering
   \includegraphics[width=0.86\textwidth,trim={8mm 4mm 8mm 0mm}]{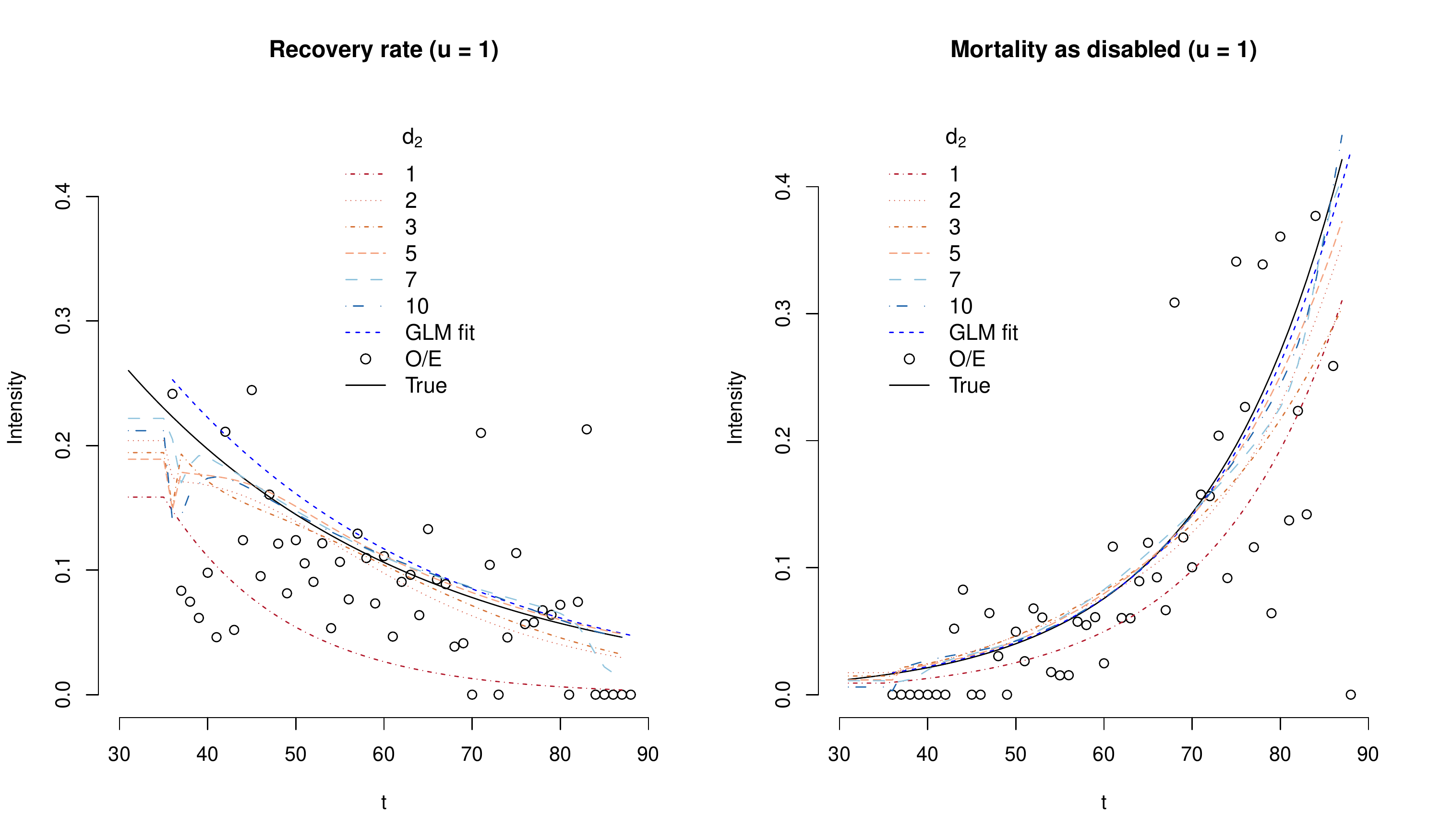}
   \caption{Estimated transition rates \eqref{eq:fitted_semi-Markov_rates} for recoveries (left) and deaths as disabled (right) as a function of time, $t$, with a fixed duration $u = 1$ for different number of disability microstates, $d_2$, along with the GLM fit as well as the true rates, $\nu_{2j}$, and the empirical occurrence--exposure rates.}
   \label{fig:intensity_pfa_rates_u1}
\end{figure}

\begin{figure}[H]
   \centering
   \includegraphics[width=0.86\textwidth,trim={8mm 4mm 8mm 0mm}]{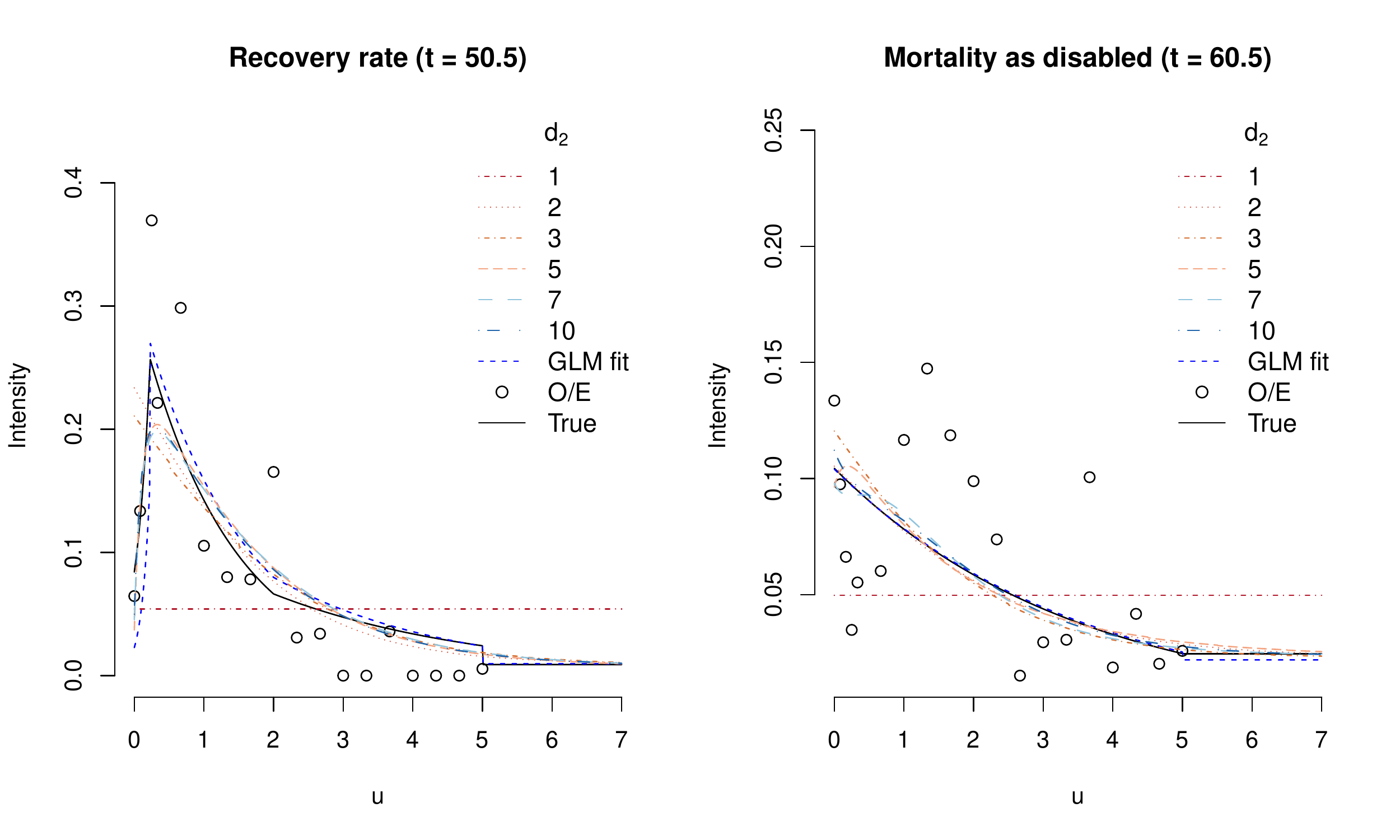}
   \caption{Estimated transition rates \eqref{eq:fitted_semi-Markov_rates} for recoveries (left) and deaths as disabled (right) as a function of duration, $u$, with a fixed time, $t$, for different number of disability microstates, $d_2$, along with the GLM fit as well as the true rates, $\nu_{2j}$, and the empirical occurrence--exposure rates. }
   \label{fig:intensity_pfa_rates_t1}
\end{figure}

Indeed, except for a very small amount of data for recoveries at young ages, we see fits with a high number of microstates that performs at least as well as the GLM fit.   

To round off the analysis, we consider in {\color{mogens}Figure \ref{fig:intensity_pfa_rates_t1}} the fitted rates \eqref{eq:fitted_semi-Markov_rates} as a function of the duration $u$ for a fixed time, $t$.\ Due to the lack of data, we focus on $t = 50.5$ for the recovery rate and $t = 60.5$ for the mortality as disabled, which are ages with sufficient data. Here, we also see the lower accuracy in the fits close to the origin, as with the conditional densities, which is particularly the case for the recovery rate. The irregularity of the true rates in this region seems to require many microstates for the model to capture this kind of duration effect fully. However, the GLM fit also seems to have the same problems, so the performance of the two fits may also be comparable here. For the mortality as disabled, however, we see the GLM fit{\color{mogens}s} much better for durations smaller than five years, while our model then seems to fit better for higher durations. This conforms with the observations made for the conditional densities.

\vspace*{0.5cm}

\noindent \textbf{Acknowledgments.} We thank Christian Furrer for his valuable suggestions and fruitful discussions during this work.


\appendix

\section{}\label{ap:proofs}

{\color{mogens}
For the proof of Theorem \ref{thm:cond_log-lik_general}, we need the following lemma, which builds upon \cite[Lemma 3.2]{IPH_Piecewise}. The result relates to similar conditional distributions considered in \cite{norberg1991, hoem69b}, but where we include past and future jump times in the conditioning.  }

\begin{lemma}\label{lem:help}
Let $\mat{X}=\{\mat{X}(t)\}_{t\geq 0}=\{(X_1(t), X_2(t))\}_{t\geq 0}$ be a time--inhomogeneous Markov jump process on $E$ with transition intensity matrix function $\mat{M}$ and initial distribution $(\vect{\pi}_1(0), \vect{0})$.\ Let $\mathcal{S}_n = (T_i, Y_i)_{i\leq n}$ be the first $n$ jump times and transitions of the macrostate process $X_1$.\ Then, for $\ell\in \{1,\ldots,n\}$, the conditional process within a macro sojourn,   
\begin{align*}
W_\ell(t)  \overset{\mathrm{d}}{=} \mat{X}(t)\big|\mathcal{S}_n \quad \text{on} \quad \left(T_{\ell-1}\leq t <  T_{\ell}\right)\!,
\end{align*}
is a time--inhomogeneous Markov jump process taking values on $\{1,\ldots,d_{Y_ {\ell-1} }\}$ with initial distribution
\begin{align*}
\widetilde{\pi}^\ell_{i}(\mathcal{S}_n) = \frac{\vect{\alpha}(\mathcal{S}_{\ell-1})\vect{e}_{i}\vect{e}_{i}^{\prime}\bar{\mat{P}}_{Y_{\ell-1}}\!(T_{\ell-1}, T_\ell)\vect{\alpha}_{\ell + 1}(\mathcal{S}_n) } {\vect{\alpha}(\mathcal{S}_n)\vect{1}_{d_{Y_n}}  } ,
\end{align*}
transition probabilities
\begin{align*}
\widetilde{p}^\ell_{ij}(t,s\big|\mathcal{S}_n) =   
\frac{
\vect{e}_{i }^{\prime}\bar{\mat{P}}_{Y_{\ell-1}}(t,s)\vect{e}_{j}\vect{e}_{j}^{\prime}\bar{\mat{P}}_{Y_{\ell-1}}(s,T_\ell)\vect{\alpha}_{\ell + 1}(\mathcal{S}_n) 
}{
\vect{e}_{i}^{\prime}\bar{\mat{P}}_{Y_{\ell-1}}(t,T_\ell)\vect{\alpha}_{\ell + 1}(\mathcal{S}_n) 
},
\end{align*}
and transition intensities
\begin{align*}
\widetilde{\mu}_{ij}^\ell(t\big|\mathcal{S}_n) =   
\mu_{(Y_{\ell-1}, i )(Y_{\ell-1}, j )}(t)
\frac{
\vect{e}_{j}^{\prime}\bar{\mat{P}}_{Y_{\ell-1}}(t,T_\ell)\vect{\alpha}_{\ell + 1}(\mathcal{S}_n) 
}{
\vect{e}_{i}^{\prime}\bar{\mat{P}}_{Y_{\ell-1}}(t,T_\ell)\vect{\alpha}_{\ell + 1}(\mathcal{S}_n)
},
\end{align*}
where $\vect{e}_{k}$, $k\in \{1,\ldots,d_{Y_ {\ell-1} }\}$, are $d_{Y_{\ell-1}}$-dimensional column vectors with one in entry $k$ and zeros otherwise.  
\end{lemma}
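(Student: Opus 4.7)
The plan is to combine the strong Markov property of $\mat{X}$ with Bayes' rule applied to the explicit marked--point--process density \eqref{eq:density_sn}, essentially performing a Doob $h$--transform on each macro sojourn. The key heuristic is that, within the sojourn $[T_{\ell-1}, T_\ell)$, the pre--sojourn data $\mathcal{S}_{\ell-1}$ only affects the microstate dynamics through the distribution of $X_2(T_{\ell-1})$, while the post--sojourn data only affect them through the constraint that the first exit out of macrostate $Y_{\ell-1}$ land in $Y_\ell$ exactly at time $T_\ell$ (together with the subsequent micro transitions absorbed into $\vect{\alpha}_{\ell+1}$).

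First, I would invoke the Markov property of $\mat{X}$ twice: given $(Y_{\ell-1}, X_2(T_{\ell-1}))$ the past $\mathcal{S}_{\ell-1}$ is independent of the future, and given $(Y_\ell, X_2(T_\ell))$ the post--$T_\ell$ trajectory is independent of everything preceding. This reduces the conditional law of $\{\mat{X}(t)\}_{t \in [T_{\ell-1}, T_\ell)}$ given $\mathcal{S}_n$ to that of a time--inhomogeneous Markov jump process on $\{1,\ldots,d_{Y_{\ell-1}}\}$, conditioned only on its entry microstate distribution at $T_{\ell-1}$ and on its first exit being at time $T_\ell$ into macrostate $Y_\ell$. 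Markovianity of $W_\ell$ is then automatic, since the conditional intensity will turn out to depend on the present microstate and the (fixed) conditioning data alone.

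Second, I would derive the initial distribution $\widetilde{\pi}^\ell_i$ by Bayes' rule: decompose $\vect{\alpha}(\mathcal{S}_n)$ according to \eqref{eq:density_sn} into its pre--sojourn piece $\vect{\alpha}(\mathcal{S}_{\ell-1})$, its within--sojourn piece, and its post--sojourn piece (encoded by $\vect{\alpha}_{\ell+1}(\mathcal{S}_n)$), then pin $X_2(T_{\ell-1}) = i$ by inserting $\vect{e}_i\vect{e}_i^{\prime}$ at the appropriate slot and divide by the unconditional macro density $\vect{\alpha}(\mathcal{S}_n)\vect{1}_{d_{Y_n}}$. For the transition probabilities $\widetilde{p}^\ell_{ij}(t,s|\mathcal{S}_n)$, I would run the same pinning argument at the two intermediate times $t < s$ within the sojourn, exploiting the multiplicativity $\bar{\mat{P}}_{Y_{\ell-1}}(t, T_\ell) = \bar{\mat{P}}_{Y_{\ell-1}}(t,s)\bar{\mat{P}}_{Y_{\ell-1}}(s, T_\ell)$ of the product integral, and dividing by the pinned--at--$i$ marginal obtained by removing the $\vect{e}_j\vect{e}_j^{\prime}$ insertion.

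Finally, the transition intensities $\widetilde{\mu}^\ell_{ij}$ follow from the transition probabilities by taking the right derivative at $s = t$, using Kolmogorov's forward equation for $\bar{\mat{P}}_{Y_{\ell-1}}$ to read off the rate $\mu_{(Y_{\ell-1}, i)(Y_{\ell-1}, j)}(t)$ from the derivative of the numerator at $s=t$, while the denominator passes to $\vect{e}_i^{\prime}\bar{\mat{P}}_{Y_{\ell-1}}(t, T_\ell)\vect{\alpha}_{\ell+1}(\mathcal{S}_n)$ unchanged. I expect the main obstacle to be the careful accounting of the mixed discrete--continuous reference measure on the marked--point--process state space of $\mathcal{S}_n$, so that Bayes' rule is applied with the correct densities; this is essentially a sojourn--wise extension of \cite[Lemma 3.2]{IPH_Piecewise} (which treats a single IPH observation) to incorporate both the past history via $\vect{\alpha}(\mathcal{S}_{\ell-1})$ and the future history via $\vect{\alpha}_{\ell+1}(\mathcal{S}_n)$, with the density \eqref{eq:density_sn} supplied by \cite[Lemma A.1]{AhmadBladtFurrer}.
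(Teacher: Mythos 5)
Your proposal is correct and follows essentially the same route as the paper's proof: both rest on the explicit marked--point--process density $\vect{\alpha}(\sai_n)\vect{1}_{d_{y_n}}\md t_1\cdots\md t_n$ from \cite[Lemma A.1]{AhmadBladtFurrer}, the Markov property of $\mat{X}$ to factor the past into $\vect{\alpha}(\mathcal{S}_{\ell-1})$ and the future into $\vect{\alpha}_{\ell+1}(\mathcal{S}_n)$, a Bayes--type pinning of the microstate via $\vect{e}_i\vect{e}_i^{\prime}$ together with multiplicativity of the product integral, and a limit $h\downarrow 0$ of $\widetilde{p}^\ell_{ij}(t,t+h|\mathcal{S}_n)/h$ for the intensities. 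The paper formalises your ``Bayes' rule'' step as a tower--property computation with the filtrations $\mathcal{F}^{\mat{X}}(t)\vee\sigma(T_\ell,Y_\ell,\ldots,T_n,Y_n)$, but this is the same $h$--transform argument you describe, extending \cite[Lemma 3.2]{IPH_Piecewise} exactly as you indicate.
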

\begin{proof}
It follows from \cite[Lemma A.1]{AhmadBladtFurrer} that
\begin{align*}
\mathbb{P}\!\left(\left. T_{\ell} \in \md t_{\ell}, Y_{\ell} = y_{\ell}\, \right| \mathcal{S}_{\ell-1} = \sai_{\ell-1}\right) 
= 
\frac{\vect{\alpha}(\sai_{\ell})\vect{1}_{d_{y_{\ell}}}   }{\vect{\alpha}(\sai_{\ell-1})\vect{1}_{d_{y_{\ell-1}}}}\!\md t_{\ell},
\end{align*}
which implies, using that $\mathbb{P}(\mathcal{S}_0 = \sai_0) = 1$,  
\begin{align}\nonumber
\mathbb{P}\!\left( 
T_{n} \in \md t_{n}, Y_{n} = y_{n}, \ldots, 
T_1\in \md t_1, Y_1 = y_1 \right)
&=  
\prod_{\ell=1}^n \mathbb{P}\!\left(\left. T_\ell \in \md t_\ell, Y_\ell  = y_\ell \, \right| \mathcal{S}_{\ell-1} = \sai_{\ell - 1}\right) \\[0.2 cm] \label{eq:density}
&=\prod_{\ell=1}^n \frac{\vect{\alpha}(\sai_{\ell})\vect{1}_{d_{y_{\ell}}}   }{\vect{\alpha}(\sai_{\ell-1})\vect{1}_{d_{y_{\ell-1}}}}\!\md t_{\ell}.  \\[0.2 cm]
&= \vect{\alpha}(\sai_n)\vect{1}_{d_{y_n}}\! \md t_1\md t_2\cdots \md t_n.  \nonumber
\end{align}
The remaining part of the proof now largely follows the approach taken in \cite[Lemma 3.2]{IPH_Piecewise}.\ Fix $\ell \in \{1,\ldots,n\}$ and $t,s\geq 0$.\ Applying same techniques as in \eqref{eq:density}, we get from the Markov property of $\mat{X}$, that on the event $(T_{\ell-1}\leq s<T_\ell)$,
\begin{align}\label{eq:density_shift}
\mathbb{E}\!\left[\left.\mathds{1}_{
(T_\ell \in \md t_\ell, Y_\ell = y_\ell, \ldots, T_n\in \md t_n, Y_n = y_n)} \, 
\right|
\mathcal{F}^{\mat{X}}(s) 
\right] = \vect{e}_{X_2(s)}^{\prime}\bar{\mat{P}}_{X_1(s)}(s,t_\ell)\vect{\alpha}_{\ell+1}(\sai_n)\md t_\ell \cdots\md t_n, 
\end{align}
and similarly when conditioning on $\mathcal{F}^{\mat{X}}(t)$, on the corresponding event at time $t$.\ The transition probabilities for $W_\ell$ are then obtained as follows, on the event $(T_{\ell-1}\leq t < T_\ell)$, 
\begin{align*}
\mathbb{E}\!\left[\left. \mathds{1}_{(W_\ell(s) = \mat{\jay})}\, \right| \mathcal{F}^{W_\ell}(t)\right] &= \mathbb{E}\!\left[\left. \mathds{1}_{(\mat{X} (s) = \mat{\jay})}\mathds{1}_{[T_{\ell-1}, T_\ell)}(s) \,\right| \mathcal{F}^{\mat{X}}(t)\vee \sigma (\mathcal{S}_n)\right] \\[0.2 cm]
&= \mathbb{E}\!\left[\left. \mathds{1}_{(\mat{X} (s) = \mat{\jay})}\mathds{1}_{[T_{\ell-1}, T_\ell)}(s)\,\right| \mathcal{F}^{\mat{X}}(t)\vee \sigma (T_\ell, Y_\ell,\ldots,T_n,Y_n)\right] \\[0.2 cm]
&= \frac{
\mathbb{E}\!\left[\left.  \mathds{1}_{(\mat{X} (s) = \mat{\jay})}\mathds{1}_{[T_{\ell-1}, T_\ell)}(s)\, \right| \mathcal{F}^{\mat{X}}(t)\right]\vect{e}_{\tilde{j}}^{\prime}\bar{\mat{P}}_{j}(s,T_\ell)\vect{\alpha}_{\ell+1}(\mathcal{S}_n)
}{
\vect{e}_{X_2(t)}^{\prime}\bar{\mat{P}}_{X_1(t)}(t,T_\ell)\vect{\alpha}_{\ell+1}(\mathcal{S}_n)
} \\[0.2 cm]
&= \frac{
 \vect{e}_{X_2(t)}^{\prime}\bar{\mat{P}}_{X_1(t)}(t,s)\vect{e}_{\tilde{j}} \vect{e}_{\tilde{j}}^{\prime}\bar{\mat{P}}_{j}(s,T_\ell)\vect{\alpha}_{\ell+1}(\mathcal{S}_n)
}{
\vect{e}_{X_2(t)}^{\prime}\bar{\mat{P}}_{X_1(t)}(t,T_\ell)\vect{\alpha}_{\ell+1}(\mathcal{S}_n)
},
\end{align*}
where use the Markov property of $\mat{X}$ in the second equality, and the tower property with the sigma--algebra $\mathcal{F}^{\mat{X}}(s)\vee \sigma (T_\ell, Y_\ell,\ldots,T_n,Y_n)\supseteq \mathcal{F}^{\mat{X}}(t)\vee \sigma (T_\ell, Y_\ell,\ldots,T_n,Y_n)$ in the third equality. The fourth equality follows from \cite[Lemma A.1 and Proposition 4.1]{AhmadBladtFurrer}. 
Conditioning on $\mat{X}(t) = \mat{\iay}$, we get the desired transition probabilities. From these, the corresponding transition intensities follow immediately: 
\begin{align*}
\widetilde{\mu}_{ij}^\ell(t\big|\mathcal{S}_n) &= \lim_{h\downarrow 0}\frac{\widetilde{p}^\ell_{ij}(t,t+h\big| \mathcal{S}_n)}{h}
\\[0.2 cm]
&= \frac{1}{\vect{e}_{i}^{\prime}\bar{\mat{P}}_{Y_{\ell-1}}(t,T_\ell)\vect{\alpha}_{\ell + 1}(\mathcal{S}_n)} 
\lim_{h\downarrow 0}  \frac{
\vect{e}_{i }^{\prime}\bar{\mat{P}}_{Y_{\ell-1}}(t,t+h)\vect{e}_{j}}{h}\vect{e}_{j}^{\prime}\bar{\mat{P}}_{Y_{\ell-1}}(t+h,T_\ell)\vect{\alpha}_{\ell + 1}(\mathcal{S}_n) \\[0.2 cm]
&=\mu_{(Y_{\ell-1}, i )(Y_{\ell-1}, j )}(t)
\frac{
\vect{e}_{j}^{\prime}\bar{\mat{P}}_{Y_{\ell-1}}(t,T_\ell)\vect{\alpha}_{\ell + 1}(\mathcal{S}_n) 
}{
\vect{e}_{i}^{\prime}\bar{\mat{P}}_{Y_{\ell-1}}(t,T_\ell)\vect{\alpha}_{\ell + 1}(\mathcal{S}_n) 
}. 
\end{align*}
Here, we use the continuity of the transition (sub-)probability matrix function $\bar{\mat{P}}_{Y_{\ell-1}}$ obtained from the continuity of product integrals (whenever they exist). Lastly, to derive the initial distribution, we see using the same techniques as in \eqref{eq:density}, that 
\begin{align}\nonumber
&\mathbb{P}\!\left(T_{n} \in \md t_{n}, Y_{n} = y_{n}, \ldots, T_{\ell-1}\in \md t_{\ell-1}, \mat{X}(T_{\ell-1}) = (y_{\ell-1}, \tilde{i}),\ldots, T_1 \in \md t_1, Y_1 = y_1\right) \\[0.2 cm]\label{eq:lem_initial_dist}
&= \vect{\alpha}(\sai_{\ell-1})\vect{e}_{\tilde{i}}\vect{e}_{\tilde{i}}^{\prime}\bar{\mat{P}}_{y_{\ell-1}}(t_{\ell-1},t_\ell)\vect{\alpha}_{\ell+1}(\sai_n)\md t_1\cdots\md t_n,
\end{align}
and so 
\begin{align*}
\mathbb{P}\!\left(W_\ell(T_{\ell-1}) = \mat{\iay}\right) &= \mathbb{E}\!\left[\left. \mathds{1}_{(\mat{X}(T_{\ell-1}) = \mat{\iay})} \, \right| \sigma(\mathcal{S}_n)\right] = \frac{\vect{\alpha}(\mathcal{S}_{\ell-1})\vect{e}_{\tilde{i}}\vect{e}_{\tilde{i}}^{\prime}\bar{\mat{P}}_{Y_{\ell-1}}(T_{\ell-1},T_\ell)\vect{\alpha}_{\ell+1}(\mathcal{S}_n)   }{\vect{\alpha}(\mathcal{S}_n)\vect{1}_{d_{y_n}}},
\end{align*}   
as desired. 
\end{proof}

\begin{proof}[Proof of Theorem~\ref{thm:cond_log-lik_general}]
The proof largely follows the approach in \cite[Theorem 3.4]{IPH_Piecewise}.\ For notational convenience, we write $\mathbb{E}^{(m)}$ for the expectation operator $\mathbb{E}_{(\vect{\pi}_1^{(m)},\, \mat{\theta}^{(m)})}$. Now, it is evident from an application of Fubini's theorem that the conditional expected log--likelihood \eqref{eq:cond_log-likelihood_general_def} is on the form \eqref{eq:cond_log-lik_general}, and it, therefore, suffices to show the results for the conditional expected statistics \eqref{eq:cond_statistics_general}--\eqref{eq:cond_statistics_general_N}. By independence between the data points in $\vect{\mathcal{S}}$ and Lemma \ref{lem:help}, we get
\begin{align*}
\bar{B}^{(m)}_{(1,r)}(0) = \sum_{n = 1}^N  \mathbb{E}^{(m)}\!\!\left[\left. \mathds{1}_{(\mat{X}^{(n)}(0)\, =\, (1,r))} \, \right|  \mathcal{S}^{(n)} \right] = \tilde{\pi}^0_r\big(\mathcal{S}^{(n)} \big),
\end{align*}
which by insertion yields the desired result. For $\bar{I}_{\mat{\iay}}^{(m)}$, we get
\begin{align*}
\bar{I}_{\mat{\iay}}^{(m)}(u) &=  \sum_{n=1}^N \mathbb{E}^{(m)}\!\!\left[\left. \mathds{1}_{(\mat{X}^{(n)}(u)\, =\, \mat{\iay} )} \, \right|  \mathcal{S}^{(n)} \right] 
\\[0.2 cm]
&= \sum_{n=1}^N\sum_{\ell = 1}^{M^{(n)}}
\mathds{1}_{\big[T_{\ell-1}^{(n)},\, T_\ell^{(n)}\big)}\!(u)
\mathds{1}_{\big(Y_{\ell-1}^{(n)}\, =\, i\big)}
\sum_{\check{i} = 1}^{d_i} \widetilde{\pi}^\ell_{\check{i}}\big(\mathcal{S}^{(n)}\big)\widetilde{p}_{\check{i}\,\tilde{i}}\big(T^{(n)}_{\ell-1},u|\mathcal{S}^{(n)}\big).
\end{align*}
The inner sum is then given by
\begin{align*}
\sum_{\check{i} = 1}^{d_i} \widetilde{\pi}^\ell_{\check{i}}\!\big(\mathcal{S}^{(n)}\big)\widetilde{p}_{\check{i}\tilde{i}}\big(T^{(n)}_{\ell-1},u|\mathcal{S}^{(n)}\big) 
\!&=\!\!
\sum_{\check{i} = 1}^{d_i}\!
 \frac{
\vect{\alpha}^{(m)}\!\big(\mathcal{S}^{(n)}_{\ell-1}\big)
\vect{e}_{\check{i}}\vect{e}_{\check{i}}^{\prime}\!
\bar{\mat{P}}^{(m)}_i\!\big(T^{(n)}_{\ell-1},s)\vect{e}_{\tilde{i}}\vect{e}_{\tilde{i}}^{\prime}\!\bar{\mat{P}}^{(m)}_i\!\big(s,T_\ell^{(n)}\big)\vect{\alpha}^{(m)}_{\ell + 1}\!\big(\mathcal{S}^{(n)}\big) 
}
{
\vect{\alpha}^{(m)}\!\big(\mathcal{S}^{(n)}\big)\vect{1}_{d_{y_n}}  
}\\[0.2 cm]
&= \frac{
\vect{\alpha}^{(m)}\!\big(\mathcal{S}^{(n)}_{\ell-1}\big)
\bar{\mat{P}}^{(m)}_i\!\big(T_{\ell-1}^{(n)},s\big)
\vect{e}_{\tilde{i}}\vect{e}_{\tilde{i}}^{\prime}
\bar{\mat{P}}^{(m)}_i\!\big(s,T_\ell^{(n)}\big)
\vect{\alpha}^{(m)}_{\ell + 1}\!\big(\mathcal{S}^{(n)}\big) 
}
{
\vect{\alpha}^{(m)}\!\big(\mathcal{S}^{(n)}\big)\vect{1}_{d_{y_n}}  
},
\end{align*}
which shows the result for $\bar{I}^{(m)}_{\mat{\iay}}$.\ For $\bar{N}_{\mat{\iay}(i,\check{i})}^{(m)}$, $\check{i}\in \{1,\ldots,d_i\}$, $\check{i}\neq \tilde{i}$, we get 
\begin{align*}
\bar{N}^{(m)}_{\mat{\iay}(i,\check{i})}(u) &= 
\sum_{n=1}^N\sum_{\ell = 1}^{M^{(n)}} \mathbb{E}^{(m)}\!\!\left[\left. \int_{(0,u]}\mathds{1}_{\big[T_{\ell-1}^{(n)},\, T_\ell^{(n)}\big)}\!(x)\mathds{1}_{\big(Y_{\ell-1}^{(n)}\, =\, i\big)} \md N_{\mat{\iay}(i,\check{i})}^{(n)}(x) \, \right|  \mathcal{S}^{(n)} \right]\!. 
\end{align*} 
Then, using the intensity process of $\{\mat{X}^{(n)}(x)\}_{x\in \big[T_{\ell-1}^{(n)},\, T_{\ell}^{(n)}\big)}\big|\mathcal{S}^{(n)}$ from Lemma \ref{lem:help},  
\begin{align*}
\bar{N}^{(m)}_{\mat{\iay}(i,\check{i})}(u) &= \sum_{n=1}^N\!\sum_{\ell = 1}^{M^{(n)}} \mathbb{E}^{(m)}\!\!\left[ \int_0^u \mathds{1}_{\big[T_{\ell-1}^{(n)},\, T_\ell^{(n)}\big)}\!(x)\mathds{1}_{\big(Y_{\ell-1}^{(n)}\, =\, i\big)}\!  \mathds{1}_{(\mat{X}^{(n)}(x)\, =\, \mat{\iay})}
\widetilde{\mu}^\ell_{\tilde{i}\check{i}}\big(x\big|\mathcal{S}^{(n)}\big)\!\md x   \, \bigg|  \mathcal{S}^{(n)} \right] 
\\[0.2 cm]
& = \sum_{n=1}^N\!\sum_{\ell = 1}^{M^{(n)}}  
\!\int_0^u \mathds{1}_{\big[T_{\ell-1}^{(n)},\, T_\ell^{(n)}\big)}\!(x)\mathds{1}_{\big(Y_{\ell-1}^{(n)}\, =\, i\big)}\times \\
&\qquad\qquad\qquad\sum_{r = 1}^{d_i} \widetilde{\pi}^\ell_{r}(\mathcal{S}^{(n)})\widetilde{p}_{r\tilde{i}}\big(T^{(n)}_{\ell-1},x|\mathcal{S}^{(n)}\big)   
\widetilde{\mu}^\ell_{\tilde{i}\check{i}}(x\big|\mathcal{S}^{(n)})\md x  
\\[0.2 cm]
&=\sum_{n=1}^N\!\sum_{\ell = 1}^{M^{(n)}}  
\int_0^u \mathds{1}_{\big[T_{\ell-1}^{(n)},\, T_\ell^{(n)}\big)}(x)\mathds{1}_{\big(Y_{\ell-1}^{(n)}\, =\, i\big)}\times \\[0.2 cm]
&\qquad 
\frac{
\vect{\alpha}^{(m)}\!\big(\mathcal{S}^{(n)}_{\ell-1}\big)\bar{\mat{P}}^{(m)}_i\!\big(T^{(n)}_{\ell-1},x\big)\vect{e}_{\tilde{i}}\mu_{\mat{\iay}(i,\check{i})}\big(x; \mat{\theta}^{(m)} \big)\vect{e}_{\tilde{i}}^{\prime}\bar{\mat{P}}^{(m)}_i\!\big(x,T_\ell^{(n)}\big)\vect{\alpha}^{(m)}_{\ell + 1}\!\big(\mathcal{S}^{(n)}\big) 
}
{
\vect{\alpha}^{(m)}\!\big(\mathcal{S}^{(n)}\big)\vect{1}_{d_{y_n}}  
}
\md x.
\end{align*} 
Taking dynamics in $u$ then yields the desired result. Finally, for $\bar{N}^{(m)}_{\mat{\iay}\mat{\jay}}$, $j\neq i$, we see that $N_{\mat{\iay}\mat{\jay}}$ can be written as
\begin{align*}
N_{\mat{\iay}\mat{\jay}}(u) = \sum_{n=1}^N\sum_{\ell = 1}^{M^{(n)}} \mathds{1}_{\big(Y_{\ell-1}^{(n)}\, =\, i,\, Y_{\ell}^{(n)}\, =\, j\big)}\mathds{1}_{\big(T_\ell^{(n)}\,\leq\, u\big)}\mathds{1}_{\big(\mat{X}^{(n)}(T_\ell^{(n)}-)\, =\, \mat{\iay}\big)}\mathds{1}_{\big(\mat{X}^{(n)}(T_\ell^{(n)})\, =\, \mat{\jay}\big)}
\end{align*}
such that
\begin{align*}
\bar{N}^{(m)}_{\mat{\iay}\mat{\jay}}(u) &= \sum_{n=1}^N\sum_{\ell = 1}^{M^{(n)}} \mathds{1}_{\big(Y_{\ell-1}^{(n)}\, =\, i,\, Y_{\ell}^{(n)}\, =\, j\big)}\mathds{1}_{\big(T_\ell^{(n)}\,\leq\, u\big)}\times  \\
&\qquad\qquad\qquad  \mathbb{E}^{(m)}\!\!\left[\left.\mathds{1}_{\big(\mat{X}^{(n)}(T_\ell^{(n)}-)\, =\, \mat{\iay}\big)}\mathds{1}_{\big(\mat{X}^{(n)}(T_\ell^{(n)})\, =\, \mat{\jay}\big)}\,
\right| \sigma\big(\mathcal{S}^{(n)}\big)\right]\!. 
\end{align*}
Now, since 
\begin{align*}
\mathbb{P}^{(m)}\!\!\left(\left.
\mat{X}^{(n)}(T_\ell^{(n)}) = \mat{\jay}\,
\right| \mat{X}^{(n)}(T_\ell^{(n)}-) = \mat{\iay}\right) = \mu_{\mat{\iay}\mat{\jay}}\big(T_\ell^{(n)}; \mat{\theta}^{(m)}\big), 
\end{align*}
we get, using the same techniques as in \eqref{eq:density}--\eqref{eq:lem_initial_dist}, that  
\begin{align*}
&\mathbb{P}\Big(
T^{(n)}_{M^{(n)}} \in \md t^{(n)}_{m^{(n)}}, Y^{(n)}_{M^{(n)}} = y^{(n)}_{m^{(n)}}, \ldots, 
T^{(n)}_{\ell}\in \md t^{(n)}_{\ell},  \mat{X}(T_{\ell}^{(n)}) = \mat{\jay}, \mat{X}(T_{\ell}^{(n)}-) = \mat{\iay},\\[0.2 cm]
&\quad\ T_{\ell-1}^{(n)}\in \md t^{(n)}_{\ell-1}, Y^{(n)}_{\ell-1} = i,\ldots, T_1 \in \md t_1, Y_1 = y_1\Big) \\[0.2 cm]
&\!\!\!\!\!  = \vect{\alpha}^{(m)}\!\big(\sai^{(n)}_{\ell-1}\big)\bar{\mat{P}}^{(m)}_i\!\big(t^{(n)}_{\ell-1},t^{(n)}_\ell\big)\vect{e}_{\tilde{i}}\mu_{\mat{\iay}\mat{\jay}}\big(t^{(n)}_\ell; \mat{\theta}^{(m)}\big)\vect{e}_{\tilde{j}}^{\prime}\mat{\bar{P}}_j^{(m)}\!\big(t^{(n)}_\ell,t^{(n)}_{\ell+1}\big)\vect{\alpha}^{(m)}_{\ell+2}\big(\sai^{(n)}\big)\!\md t^{(n)}_1\cdots\!\md t^{(n)}_{m^{(n)}}.
\end{align*}
Hence, 
\begin{align*}
\bar{N}^{(m)}_{\mat{\iay}\mat{\jay}}(u) &= \sum_{n=1}^N\sum_{\ell = 1}^{M^{(n)}} \mathds{1}_{\big(Y_{\ell-1}^{(n)}\, =\, i,\, Y_{\ell-1}^{(n)}\, =\, j\big)}\mathds{1}_{\big(T_\ell^{(n)}\,\leq\, u\big)}\times \\[0.2 cm]
&\frac{\vect{\alpha}^{(m)}\!\big(\mathcal{S}^{(n)}_{\ell-1}\big)\bar{\mat{P}}^{(m)}_i\! \big(T^{(n)}_{\ell-1},T^{(n)}_\ell\big)\vect{e}_{\tilde{i}}\mu_{\mat{\iay}\mat{\jay}}\big(T^{(n)}_\ell; \mat{\theta}^{(m)}\big)\vect{e}_{\tilde{j}}^{\prime}\mat{\bar{P}}^{(m)}_i\!\big(T^{(n)}_\ell,T^{(n)}_{\ell+1}\big)\vect{\alpha}^{(m)}_{\ell+2}\big(\mathcal{S}^{(n)}\big)}{\vect{\alpha}^{(m)}\big(\mathcal{S}^{(n)}\big)\vect{1}_{n} }. 
\end{align*}
Taking dynamics in $u$ then yields the desired result. 
\end{proof}



\begin{proof}[Proof of Theorem~\ref{cor:cond_log-likelihood_reset_general}]
It is evident from the complete data log-likelihood \eqref{eq:log-likelihood_reset} that the conditional expectation \eqref{eq:cond_log-likelihood_reset_general_def} is on the form \eqref{eq:cond_log-likelihood_reset_general}, by an application of Fubini's theorem.\ It, therefore, suffices to show that the conditional statistics are given as in \eqref{eq:cond_statistics_reset_general_1}--\eqref{eq:cond_statistics_reset_general_2}.\ To obtain these, the defining property is, as also noted in \cite[Subsection 4.2]{AhmadBladtFurrer}, that the reset property \eqref{eq:indep_cond} implies that for $j\neq i$, 
\begin{align*}
\mat{\pi}_i\big(t; \vect{\eta}^{(m)}\big)\bar{\mat{P}}^{(m)}(t,s)\vect{\beta}_{ij}\big(s; \mat{\theta}^{(m)}\big)
\end{align*}
is a $1\times 1$-dimensional matrix, and thus cancels if appearing in both the numerator and denominator of a fraction. Concerning the conditional statistics within macrostates, $\bar{I}^{(m)}_{\mat{\iay}}$ and $\bar{N}^{(m)}_{\mat{\iay}(i,\check{i})}$, this property implies
\begin{align}\label{eq:I/dN_reset}
\sum_{n=1}^N \frac{\mat{c}_i^{(m)}\!\big(u; \mathcal{S}^{(n)}\big)
}{
\vect{\alpha}^{(m)}\!\big(\mathcal{S}^{(n)}\big)\vect{1}_{n}
} = \sum_{n=1}^{N}\sum_{\ell = 1}^{M^{(n)}} 
\mathds{1}_{\big[T_{\ell-1}^{(n)},\, T_{\ell }^{(n)}\big)}\!(u)
\mathds{1}_{\big(Y_{\ell-1}^{(n)}\, =\, i\big)}
\frac{\mat{c}_i^{(m)}\!\big(u; T_{\ell-1}^{(n)}, i, T_{\ell}^{(n)}, Y_{\ell}^{(n)}\big)}
{
\vect{\alpha}^{(m)}\!\big(T_{\ell-1}^{(n)}, i, T_{\ell}^{(n)}, Y_{\ell}^{(n)}\big)\vect{1}_{n}
},
\end{align}
where, for generic values $0\leq x_1 < x_2 < \infty$, and $j\in \mathcal{J}$, $j\neq i$, we have for $u\in [x_1,x_2)$,   
\begin{align}\label{eq:ci_reset_general_mpp}
\begin{split}
\mat{c}_i^{(m)}(u; x_1, i, x_2, j) &=  \bar{\mat{P}}^{(m)}_{i}(u, x_2)
\vect{\beta}_{ij}\big(x_2; \mat{\theta}^{(m)}\big)    
\vect{\pi}_i\big(x_1; \vect{\eta}^{(m)}\big)
\bar{\mat{P}}^{(m)}_{i}(x_1, u),\\[0.2 cm]
\vect{\alpha}^{(m)}(x_1, i, x_2, j)\vect{1}_{d_j} &= \vect{\pi}_i\big(x_1; \vect{\eta}^{(m)}\big)
\bar{\mat{P}}^{(m)}_{i}(x_1, x_2)\vect{\beta}_{ij}\big(x_2; \mat{\theta}^{(m)}\big),
\end{split}
\end{align}
and zero otherwise. Hence, for a fixed time $u$ between two jump times, each term on the right-hand side of \eqref{eq:I/dN_reset} only depends on the last jump before $u$, the next jump time after $u$, and state jumped to at the next jump time; the past and future sojourns outside time $u$ are cancelled out. Consequently, when summing over all observations and corresponding sojourns, one can equivalently sum over the data points in $\mathcal{T}_i$, as these provide the jump times and states needed (cf.\ \eqref{eq:Ti_def}). This gives 
\begin{align}\label{eq:I/dN_reset_final}
\sum_{n=1}^N \frac{\mat{c}_i^{(m)}\!\big(u; \mathcal{S}^{(n)}\big)
}{
\vect{\alpha}^{(m)}\!\big(\mathcal{S}^{(n)}\big)\vect{1}_{n} 
} = \sum_{n=1}^{M_i}
\frac{\mat{c}_i^{(m)}\!\big(u; \mathcal{T}_{i}^{(n)}\big)}
{
\vect{\alpha}^{(m)}\!\big(\mathcal{T}_{i}^{(n)}\big)\vect{1}_{n}
},
\end{align}
which shows \eqref{eq:cond_statistics_reset_general_1} for $\bar{I}^{(m)}_{\mat{\iay}}$ and $\bar{N}^{(m)}_{\mat{\iay}(i,\check{i})}$.\ Concerning the conditional statistics for jumps between macrostates, $\bar{N}^{(m)}_{\mat{\iay}j}$, we use $\bar{N}^{(m)}_{\mat{\iay}\mat{\jay}}$ from Theorem \ref{thm:cond_log-lik_general} to get 
\begin{align*}
\bar{N}^{(m)}_{\mat{\iay}j}(u) &= 
\sum_{\tilde{j} = 1}^{d_j} \bar{N}_{\mat{\iay}\mat{\jay}}(u)\\[0.2 cm]
& = \sum_{n=1}^N \sum_{\ell = 1}^{M^{(n)}}\mathds{1}_{\big(T_{\ell}^{(n)}\, \geq\, u\big)}
\beta_{\mat{\iay}j}\big(T_\ell^{(n)}; \mat{\theta}^{(m)}\big)\vect{\pi}_{j}\big(T_\ell^{(n)}; \vect{\eta}^{(m)}\big)
\frac{  
\mat{a}_{ij}^{(m)}\!\big(T_\ell^{(n)}; \mathcal{S}^{(n)}\big)\vect{e}_{\tilde{i}}
}{
\vect{\alpha}^{(m)}\!\big(\mathcal{S}^{(n)}\big)\vect{1}_{n}
}.
\end{align*}
Using the same technique with the fraction as in \eqref{eq:I/dN_reset}--\eqref{eq:I/dN_reset_final}, we get 
\begin{align*}
&\bar{N}^{(m)}_{\mat{\iay}j}(u)\\[0.2 cm] 
&= \sum_{n=1}^N \sum_{\ell = 1}^{M^{(n)}}
\mathds{1}_{\big(T_{\ell}^{(n)}\, \geq\, u\big)}\!\mathds{1}_{\big(Y_{\ell-1}^{(n)}\, =\, i, \, Y_{\ell}^{(n)}\, =\, j \big)}
\frac{  
\vect{\pi}_{i}\big(T_{\ell-1}^{(n)}; \vect{\eta}^{(m)}\big)
\bar{\mat{P}}^{(m)}_i\!\big(T_{\ell-1}^{(n)}, T_{\ell}^{(n)} \big)
\vect{e}_{\tilde{i}}\beta_{\mat{\iay}j}\big(T_\ell^{(n)}; \mat{\theta}^{(m)}\big)
}{
\vect{\alpha}^{(m)}\!\big(T_{\ell-1}^{(n)}, i, T_{\ell}^{(n)}, j\big)\vect{1}_{n}
} \\[0.2 cm]
&= \sum_{n=1}^{M_i} \mathds{1}_{\big(\tau_{i}^{(n)}\, \geq\, u\big)}\mathds{1}_{\big(Z_{i}^{(n)}\, =\, j \big)}
\frac{  
\vect{\pi}_{i}\big(R_{i}^{(n)}; \vect{\eta}^{(m)}\big)
\bar{\mat{P}}^{(m)}_i\!\big(R_{i}^{(n)}, \tau_{i}^{(n)} \big)
\vect{e}_{\tilde{i}}\beta_{\mat{\iay}j}\big(\tau_i^{(n)}; \mat{\theta}^{(m)}\big)
}{
\vect{\alpha}^{(m)}\!\big(\mathcal{T}_i^{(n)}\big)\vect{1}_{n}
}.
\end{align*}
Taking dynamics in $u$ then yields \eqref{eq:cond_statistics_reset_general_2}. Concerning the conditional statistics for initiations into macrostates, $\bar{N}^{(m)}_{\mat{\iay}}$, we use the exact same approach as for $\bar{N}_{\mat{\iay}j}^{(m)}$, with the only difference being the summation over $\mat{\jay}$ of $\vect{e}_{\tilde{j}}\beta_{\mat{\jay}i}(\cdot; \mat{\theta}^{(m)})$ instead of a summation over $\tilde{j}$ of $\pi_{\mat{\jay}}(\cdot; \vect{\eta}^{(m)})\vect{e}_{\tilde{j}}^{\prime}$. 
\end{proof}

\bibliographystyle{plain}
\bibliography{EstimationMain.bbl}{}

\end{document}